\newcounter{RomanNumber}
\newcommand{\MyRoman}[1]{\setcounter{RomanNumber}{#1}\Roman{RomanNumber}}
\newtheorem{theorem}{Theorem}[section]
\newtheorem{lemma}[theorem]{Lemma}
\newtheorem{proposition}[theorem]{Proposition}
\newtheorem{corollary}[theorem]{Corollary}
\theoremstyle{definition}
\newtheorem{definition}[theorem]{Definition}
\newtheorem{remark}[theorem]{Remark}
\numberwithin{equation}{section}
\def\geq{\geqslant}
\def\leq{\leqslant}
\numberwithin{equation}{section}
\begin{document}
\title{The distributional hyper-Jacobian determinants  in fractional Sobolev spaces}
\author{Qiang Tu\and Chuanxi Wu\and Xueting Qiu\and
Faculty of Mathematics and Statistics, Hubei University, Wuhan 430062, China \thanks{\emph{Email addresses}:~qiangtu@whu.edu.cn(Qiang Tu),
cxwu@hubu.edu.cn (Chuanxi Wu), qiuxueting1996@163.com (Xueting Qiu).}}
\date{}   
\maketitle
\noindent{\bf Abstract:} In this paper we give a positive answer to a question raised by Baer-Jerison in connection with hyper-Jacobian determinants  and associated minors in fractional Sobolev spaces.
Inspired by recent works of Brezis-Nguyen and Baer-Jerison on the Jacobian and Hessian determinants,
we show that the distributional $m$th-Jacobian minors of degree $r$ are weak continuous in fractional Sobolev spaces $W^{m-\frac{m}{r},r}$, and
 the result is optimal, satisfying the necessary conditions, in the frame work of fractional Sobolev spaces. In particular, the conditions can be removed in case $m=1,2$, i.e.,
the $m$th-Jacobian minors of degree $r$ are well defined in $W^{s,p}$ if and only if $W^{s,p} \subseteq W^{m-\frac{m}{r},m}$ in case $m=1,2$.

\medskip

\noindent{\bf Key words:} Hyper-Jacobian,  Higher dimensional determinants, Fractional Sobolev spaces, Distributions.
\medskip

\noindent{\bf 2010 MR Subject Classification:}  46E35, 46F10,  42B35.

\section{Introduction and main results}
Fix  integer $m\geq 1$ and consider the class of non-smooth functions $u$ from $\Omega$, a smooth bounded open subset of $\mathbb{R}^N$, into $\mathbb{R}^n$( $N\geq 2$).
The aim of this article is to identify when the hyper($m$th)-Jacobian determinants and associated minors of $u$, which were introduced by Olver in \cite{O}, make sense as a distribution.

In the case $N=n$ and $m=1$,
 starting with seminal work of Morrey\cite{MC}, Reshetnyak\cite{RY} and Ball\cite{BJ} on variational problems of non-linear elasticity, it is well known that the distributional ($1$th-)Jacobian determinant $\mbox{Det}(Du)$ of a map $u\in W^{1,\frac{N^2}{N+1}}(\Omega,\mathbb{R}^N)$ (or $u\in L^{q}\cap W^{1,p}(\Omega,\mathbb{R}^N)$ with $\frac{N-1}{p}+\frac{1}{q}=1$ and $N-1< p\leq \infty$) is defined by
 $$\mbox{Det}(Du):=\sum_{j} \partial_j(u^i(\mbox{adj} Du)^i_j),$$
 where $\mbox{adj}Du$ means the adjoint matrix of $Du$.
  Furthermore, Brezis-Nguyen \cite{BN} extended the range of the map $u\mapsto \mbox{Det} (Du)$ in the framework of fractional Sobolev spaces. They showed that the distributional Jacobian determinant $\mbox{Det}(Du)$ for any $u\in W^{1-\frac{1}{N},N}(\Omega,\mathbb{R}^N)$ can be defined as
 $$\langle\mbox{Det}(Du), \psi\rangle:=\lim_{k\rightarrow \infty}\int_{\Omega}\det(Du_k)\psi dx~~~\forall \psi \in C_{c}^{1}(\Omega, \mathbb{R}),$$
where $u_k\in C^1(\overline{\Omega}, \mathbb{R}^N)$ such that $u_k\rightarrow u$ in $ W^{1-\frac{1}{N},N}$. They pointed out that  the result recovers all the definitions of distributional Jacobian determinants mentioned above, except $N=2$, and  the distributional Jacobian determinants are well-defined in $W^{s,p}$ if and only if $W^{s,p}\subseteq W^{1-\frac{1}{N},N}$ for $1<p<\infty$ and $0<s<1$.

In the case $n=1$ and $m=2$, similar to the results in \cite{BN}, the distributional Hessian(2th-Jacobian) determinants are well-defined and continuous on $W^{2-\frac{2}{N},N}(\mathbb{R}^N)$ (see \cite{IT,BJ}).   Baer-Jersion \cite{BJ} pointed out that the continuous results of  Hessian determinant  in $W^{2-\frac{2}{N},N}(\mathbb{R}^N)$ with $N\geq 3$ implies  the known continuity results  in space $W^{1,p}(\mathbb{R}^N)\cap W^{2,q}(\mathbb{R}^N)$ with $1<p,r<\infty$, $\frac{2}{p}+\frac{N-2}{q}=1$, $N\geq 3$ (see \cite{DGG,DM,FM}).
Furthermore they
showed that the distributional Hessian determinants are well-defined in $W^{s,p}$ if and only if $W^{s,p}\subseteq W^{2-\frac{2}{N},N}$ for $1<p<\infty$ and $1<s<2$.

For $m>2$, $m$th-Jacobian, as a generalization of ordinary Jacobian, was first introduced by Escherich \cite{EG} and Gegenbauer \cite{GL}.
In fact, the general formula for hyper-Jacobian can be expressed by using Cayley's theory of higher dimensional determinants.
All  these earlier investigations were limited to polynomial functions until  Olver \cite{O} turn his attention to some non-smooth functions.
 Especially he showed that the $m$th-Jacboian determinants (minors)  of degree $r$ can be defined as a distribution provided
 $$ u\in W^{m-[\frac{m}{r}],\gamma}(\Omega, \mathbb{R}^n)\cap W^{m-[\frac{m}{r}]-1,\delta} (\Omega, \mathbb{R}^n) ~\mbox{with}~\frac{r-t}{\gamma}+\frac{t}{\delta}\leq 1, t:=m ~\mbox{mod}~r$$
 or
 $$u\in W^{m-[\frac{m}{r}], \gamma}(\Omega, \mathbb{R}^n)~\mbox{with}~\gamma\geq \max\{\frac{ Nr}{N+t}\}.$$
 Bare-Jersion \cite{BJ} raised  an interesting question: whether do there exist fractional versions of this result? I.e., is the $m$th-Jacboian determinant of degree $r$  continuous from space $W^{m-\frac{m}{r},r}$ into the space of distributions? Our first results give a positive answer to the question. We refer to Sec. 2 below for the following notation.
\begin{theorem}\label{hm-thm-1}
Let $q,n,N$ be integers with $2\leq q\leq \underline{n}:=\min\{n,N\}$, for any integer $1\leq r\leq q$, multi-indices  $\beta\in I(r,n)$  and $\bm{\alpha}= (\alpha^1,\alpha^2,\cdot\cdot\cdot,\alpha^m)$ with $\alpha^j \in I(r,N)$ ($j=1,\cdots,m$), the $mth$-Jacobian $(\beta, \bm{\alpha})$-minor operator $u \longmapsto M_{\bm{\alpha}}^{\beta}(D^mu) (\mbox{see}~ (\ref{hm-pre-for-1})):C^m(\Omega,\mathbb{R}^n)\rightarrow \mathcal{D}'(\Omega)$ can be extended uniquely as a continuous mapping $u \longmapsto \mbox{Div}_{\bm{\alpha}}^{\beta}(D^mu):W^{m-\frac{m}{q},q}(\Omega,\mathbb{R}^n)\rightarrow \mathcal{D}'(\Omega)$. Moreover
for all $u,v\in W^{m-\frac{m}{q},q}(\Omega,\mathbb{R}^n)$, $\psi \in  C^{\infty}_c(\Omega,\mathbb{R})$, we have
\begin{equation}
\begin{split}
&\left|\langle\mbox{Div}_{\bm{\alpha}}^{\beta}(D^mu)-\mbox{Div}_{\bm{\alpha}}^{\beta}(D^mv),\psi\rangle\right|\\
&\leq C_{r,q,n,N,\Omega}\|u-v\|_{W^{m-\frac{m}{q},q}}\left(\|u\|_{W^{m-\frac{m}{q},q}}^{r-1} +\|v\|_{W^{m-\frac{m}{q},q}}^{r-1}\right)\|D^m\psi\|_{L^{\infty}}.
\end{split}
\end{equation}
\end{theorem}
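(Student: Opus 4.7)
My plan is in four steps: reduce to a multilinear estimate, rewrite the minor in divergence form, close via Hölder and fractional Sobolev embedding, and extend by density.

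\textbf{Step 1.} Since $u\mapsto M_{\bm{\alpha}}^\beta(D^mu)$ is a polynomial of degree $r$ in $D^mu$, I would first pass to its symmetric $r$-linear extension $\widetilde{M}(D^mu^{(1)},\ldots,D^mu^{(r)})$, and reduce the Lipschitz bound of the theorem, via the telescoping identity
\begin{equation*}
\widetilde{M}(a,\ldots,a)-\widetilde{M}(b,\ldots,b)=\sum_{k=1}^{r}\widetilde{M}(b,\ldots,b,a-b,a,\ldots,a),
\end{equation*}
to the single multilinear estimate
\begin{equation*}
\bigl|\langle \widetilde{M}(D^mu^{(1)},\ldots,D^mu^{(r)}),\psi\rangle\bigr|\leq C\,\|D^m\psi\|_{L^\infty}\prod_{k=1}^{r}\|u^{(k)}\|_{W^{m-m/q,q}}
\end{equation*}
for smooth inputs. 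The continuous extension from $C^m(\Omega,\mathbb{R}^n)$ to $W^{m-m/q,q}(\Omega,\mathbb{R}^n)$ then follows by density and uniqueness of continuous extensions.

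\textbf{Step 2.} For smooth inputs I would exploit the complete antisymmetry of $M_{\bm{\alpha}}^\beta$ in the columns indexed by $\alpha^1,\ldots,\alpha^r$ (the null-Lagrangian structure of Cayley's higher-dimensional determinant), together with commutation of partial derivatives, to produce an $m$-divergence representation
\begin{equation*}
\widetilde{M}(D^mu^{(1)},\ldots,D^mu^{(r)})=\sum_{|\gamma|=m}\partial^\gamma F_\gamma,
\end{equation*}
where each $F_\gamma$ is $r$-linear, depending on derivatives of $u^{(k)}$ of orders $j_k\leq m$ distributed as evenly as possible subject to $j_1+\cdots+j_r=(r-1)m$. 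Pairing with $\psi$ then gives $\langle\widetilde{M},\psi\rangle=(-1)^m\sum_\gamma\int_\Omega F_\gamma\,\partial^\gamma\psi\,dx$, and one bounds $\|\partial^\gamma\psi\|_{L^\infty}\leq\|D^m\psi\|_{L^\infty}$.

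\textbf{Step 3.} I would control the product $\prod_k D^{j_k}u^{(k)}$ inside $F_\gamma$ by Hölder's inequality with exponents $p_k$ satisfying $\sum 1/p_k\leq 1$ (permissible since $\Omega$ is bounded), each factor bounded via the fractional Sobolev embedding $W^{m-m/q,q}(\Omega)\hookrightarrow W^{j_k,p_k}(\Omega)$, which requires $m-m/q-N/q\geq j_k-N/p_k$. Summing the constraints over $k$ and using $\sum j_k=(r-1)m$, the simultaneous solvability of the embedding conditions reduces by a direct scaling count to the single inequality $r(m+N)\leq q(m+N)$, i.e.\ $r\leq q$, which is exactly the hypothesis of the theorem and makes explicit why $W^{m-m/q,q}$ is the critical space.

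\textbf{Main obstacle.} The hardest part is Step 2: constructing the divergence-form representation with a balanced distribution $(j_k)$. The cofactor identities used for $m=1$ in Brezis-Nguyen and for $m=2$ in Baer-Jerison do not generalize verbatim to arbitrary $m\geq 3$, and a careful combinatorial manipulation of Cayley's hyper-determinant expansion, together with Plücker-type relations, appears necessary. Moreover, when $m/q\notin\mathbb{Z}$ the orders $j_k$ cannot all be chosen as integers $\leq m-m/q$, so the redistribution must be interpreted via the Bessel-potential (or Gagliardo-Slobodeckij) characterization of $W^{m-m/q,q}$, and the final multilinear estimate reassembled through the fractional Leibniz rule.
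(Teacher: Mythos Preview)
Your Steps 1 and the density argument are fine, but Steps 2--3 contain a genuine gap that you yourself flag as the main obstacle, and the paper resolves it by a completely different mechanism that you are missing. In your scheme the integration by parts stays inside $\Omega\subset\mathbb{R}^N$, producing integer orders $j_k$ with $\sum_k j_k=(r-1)m$; the largest such $j_k$ is at least $m-\lfloor m/r\rfloor$, and this can exceed $m-m/q$ (take $m=5$, $r=q=3$: max $j_k=4>10/3$). The embedding $W^{m-m/q,q}\hookrightarrow W^{j_k,p_k}$ then fails outright, since differentiability cannot increase. Your proposed cure via fractional Leibniz is not spelled out, and making it rigorous for arbitrary $m$ (with the right endpoint exponents and on a bounded domain) would be a substantial separate project; as written, Step 3 does not close.

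The paper avoids this entirely by lifting to extra dimensions rather than redistributing derivatives in $\Omega$. Using Stein's trace theorem iterated $m$ times, $W^{m-m/q,q}(\mathbb{R}^N)$ is the trace of $W^{m,q}(\mathbb{R}^N\times(0,\infty)^m)$, so one extends $u$ to $U\in W^{m,q}(\Omega\times(0,1)^m)$ and $\psi$ to $\Psi\in C^m_c(\Omega\times[0,1)^m)$. The key identity (Lemma~3.1) is obtained by applying the fundamental theorem of calculus in each new variable $x_{N+1},\ldots,x_{N+m}$ and using the null-Lagrangian (row-expansion) structure of the minor once per variable; the result is
\[
\int_\Omega M_{\bm{\alpha}}^\beta(D^mu)\,\psi\,dx=\sum_{I\in R(\widetilde{\bm{\alpha}})}(-1)^m\sigma(\widetilde{\bm{\alpha}}-I,I)\int_{\Omega\times[0,1)^m} M_{\widetilde{\bm{\alpha}}-I}^\beta(D^mU)\,\partial_I\Psi\,d\widetilde{x},
\]
where every factor on the right is an honest $m$-th order derivative of $U$ or $\Psi$. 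Since $D^mU\in L^q$ with $r\leq q$, a single application of H\"older (no fractional embeddings, no Leibniz rule) gives the estimate. The trace-extension idea is exactly what converts the fractional problem into an integer-order one and is the ingredient your proposal lacks.
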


We recall that for $0<s<\infty$ and $1\leq p<\infty$, the fractional Sobolev space $W^{s,p}(\Omega)$ is defined as follows: when $s<1$
$$W^{s,p}(\Omega):=\left\{u\in L^p(\Omega)\mid \left(\int_{\Omega}\int_{\Omega} \frac{|u(x)-u(y)|^p}{|x-y|^{N+sp}}dxdy\right)^{\frac{1}{p}}<\infty\right\},$$
and the norm
$$\|u\|_{W^{s,p}}:=\|u\|_{L^p}+\left(\int_{\Omega}\int_{\Omega} \frac{|u(x)-u(y)|^p}{|x-y|^{N+sp}}dxdy\right)^{\frac{1}{p}}.$$
When $s>1$ with non-integer,
$$W^{s,p}(\Omega):=\{u\in W^{[s],p}(\Omega)\mid D^{[s]} u\in W^{s-[s],p}(\Omega)\},$$
the norm
$$\|u\|_{W^{s,p}}:=\|u\|_{W^{[s],p}}+\left(\int_{\Omega}\int_{\Omega} \frac{|D^{[s]}u(x)-D^{[s]}u(y)|^p}{|x-y|^{N+(s-[s])p}}dxdy\right)^{\frac{1}{p}}.$$

\begin{remark}
It is worth pointing out that we may use the same method to get a similar result, see Corollary \ref{hm-cor-3-1}, for $u\in W^{m-\frac{m}{q},q}(\Omega)$ with $m\geq 2$.
Theorem \ref{hm-thm-1} and Corollary \ref{hm-cor-3-1} recover not only all the definitions of  Jacobian  and Hessian determinants mentioned above, but also the definitions of $m$-th Jacobian in \cite{O} since the following facts
\begin{enumerate}
\item [{\em(\romannumeral1)}] $W^{m-[\frac{m}{r}],\gamma}(\Omega, \mathbb{R}^n)\cap W^{m-[\frac{m}{r}]-1,\delta} (\Omega, \mathbb{R}^n)\subset W^{m-\frac{m}{r},r}(\Omega, \mathbb{R}^n)$ with continuous embedding if $\frac{r-t}{\gamma}+\frac{t}{\delta}\leq 1$ $(1<\delta<\infty, 1<r\leq N)$, where $t:=m ~\mbox{mod}~r$.
\item[{\em(\romannumeral2)}]   $W^{m-[\frac{m}{r}], \gamma}(\Omega, \mathbb{R}^n)\subset W^{m-\frac{m}{r},r}(\Omega, \mathbb{R}^n)$ $(1<r\leq N)$ with continuous embedding if $\gamma\geq \max\{\frac{ Nr}{N+t}\}$.
\end{enumerate}
\end{remark}

Similar to the optimal results for the ordinary distributional Jacobian and Hessian determinants in \cite{BN,BJ}, an natural question is that wether  the results in Theorem \ref{hm-thm-1} is  optimal in the framework of the space $W^{s,p}$? I.e., is the distributional $m$-th Jacobian minors of degree $r$ well-defined in $W^{s,p}(\Omega, \mathbb{R}^n)$ if and only if  $W^{s,p}(\Omega, \mathbb{R}^n)\subset W^{m-\frac{r}{m},r}(\Omega, \mathbb{R}^n)$?
Such a question is connected with the construction of counter-examples in some special fractional Sobolev spaces. Indeed, the above conjecture is obviously correct in case $r=1$.
Our next results give a partial positive answer  in case $r>1$.

 \begin{theorem}\label{hm-thm-2}
Let $m, r$ be  integers with $1< r\leq \underline{n}$,  $1<p<\infty$ and $0<s<\infty$ be such that $W^{s,p}(\Omega, \mathbb{R}^n) \nsubseteq W^{m-\frac{m}{r},r}(\Omega, \mathbb{R}^n)$. If the condition
 \begin{equation}\label{hm-thm-2-for-1}
 1<r<p, s=m-m/r ~\mbox{non-integer}
 \end{equation}
 fails,
 then there exist a sequence $\{u_k\}_{k=1}^{\infty} \subset C^{m}(\overline{\Omega}, \mathbb{R}^n)$,  multi-indices  $\beta\in I(r,n)$, $\bm{\alpha}=(\alpha^1,\alpha^2,\cdot\cdot\cdot,\alpha^m)$ with $\alpha^j\in I(r,N)$ and a function $\psi\in C_c^{\infty}(\Omega)$ such that
\begin{equation}\label{hm-thm-2-for-2}
\lim_{k\rightarrow \infty} \|u_k\|_{s,p} =0, ~~~~\lim_{k\rightarrow \infty} \int_{\Omega} M^{\beta}_{\bm{\alpha}}(D^mu) \psi dx=\infty,
\end{equation}
\end{theorem}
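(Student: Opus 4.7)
The plan is to construct explicit counterexample sequences $\{u_k\}\subset C^m(\overline\Omega,\mathbb R^n)$ satisfying \eqref{hm-thm-2-for-2}, case by case according to how the embedding $W^{s,p}(\Omega,\mathbb R^n)\subseteq W^{m-m/r,r}(\Omega,\mathbb R^n)$ fails on the bounded domain. The constructions parallel the concentration and bubbling counterexamples of \cite{BN} and \cite{BJ}.

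I would start by classifying the failure modes. Standard Sobolev embeddings imply that the inclusion fails exactly when either (a) $s<m-m/r$ (a regularity defect), or (b) $p\leq r$ and $s-N/p<(m-m/r)-N/r$ (a Sobolev scaling defect). The excluded condition \eqref{hm-thm-2-for-1} is precisely the borderline case $p>r$, $s=m-m/r$ non-integer, where the bounded-domain embedding $W^{s,p}\subseteq W^{s,r}$ automatically absorbs the apparent criticality, so no counterexample can exist. Next I would fix the simplest available multi-indices $\beta=(1,\ldots,r)$ and $\alpha^1=\cdots=\alpha^m=(1,\ldots,r)$, and a decoupled template $U^i(y)=\phi_i(y_i)$ for $i\leq r$ (zero otherwise), with $\phi_i^{(m)}=g_i\in C_c^\infty(\mathbb R)$ of nonzero integral. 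A direct expansion of the Cayley-type minor then yields $M^\beta_{\bm\alpha}(D^mU)(y)=\prod_{i=1}^r g_i(y_i)$, which is compactly supported with nonzero integral. Note that the accompanying polynomial growth of $U$ of order $m-1$ at infinity is unavoidable, since the divergence-form identity underlying Theorem \ref{hm-thm-1} forces $\int M^\beta_{\bm\alpha}(D^mV)\,dy=0$ for every $V\in C_c^m$.

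The counterexample is then built either as a concentration $u_k(x)=c_kU(\lambda_k(x-x_0))$ (for the scaling-defect case), or as a lattice superposition $u_k(x)=c_k\sum_jU(\lambda_k(x-x_j))$ over $\sim\lambda_k^N$ well-separated translates in $\Omega$ (for the regularity-defect case), with parameters $c_k=\lambda_k^{-\gamma}$ and $\lambda_k\to\infty$. A careful scaling of the $W^{s,p}$-norm -- whose dominant contribution comes from the polynomial bulk of $U$ when $s\leq m-1$ and from the compactly supported ``jump'' portion of $D^{m-1}U$ when $s>m-1$ -- combined with the integral estimate $\sim c_k^r\lambda_k^{rm-N}$ per bump (resp.\ $\sim c_k^r\lambda_k^{rm}$ for the lattice) yields a nonempty exponent window for $\gamma$ precisely under the identified embedding defect, which produces $\|u_k\|_{W^{s,p}}\to 0$ and $\int M^\beta_{\bm\alpha}(D^mu_k)\psi\,dx\to\infty$.

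The principal technical obstacle is the doubly critical borderline in which the admissible exponent window for $\gamma$ collapses, most notably when $s=m-m/r$ is an integer and $p\leq r$. Here a logarithmic refinement $c_k=(\log\lambda_k)^{-\kappa}\lambda_k^{-\gamma}$, in the spirit of the sharp constructions of \cite{BN,BJ}, is required to separate the norm and integral scales. A secondary difficulty is the control of cross-interactions in the Gagliardo seminorm of the lattice superposition: well-separation of the centers (distance $\geq c/\lambda_k$) together with the compact support of $D^mU$ keeps the cross terms subordinate to the self-interactions, so that the total norm scales as $c_k\lambda_k^s$ rather than being inflated by the packing; assembling this with the previous scaling estimates across all failure modes that survive \eqref{hm-thm-2-for-1} is the main case-analysis of the proof.
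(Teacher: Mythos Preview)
Your decoupled template $U^i(y)=\phi_i(y_i)$ with $\int g_i\neq 0$ cannot drive either construction. You correctly note that $U$ must grow like $|y|^{m-1}$; but then for the single concentration $u_k=c_kU(\lambda_k(\cdot-x_0))$ on the bounded domain one has $\|u_k\|_{L^p(\Omega)}\gtrsim c_k\lambda_k^{m-1}$, and this lower-order piece is part of the inhomogeneous $W^{s,p}$ norm for every $s$---your ``jump versus bulk'' dichotomy concerns only the top seminorm. Moreover $M^\beta_{\bm\alpha}(D^mU)(y)=\prod_{i\le r}g_i(y_i)$ depends only on $y_1,\dots,y_r$, so it is supported in a slab, not a ball: the integral against $\psi$ scales as $c_k^r\lambda_k^{r(m-1)}$, not $c_k^r\lambda_k^{rm-N}$ as you wrote. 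Either way the exponent window is empty: you need $\gamma>m-1$ for the norm and $\gamma<m-1$ for the integral. The lattice is worse, since summing $\sim\lambda_k^N$ translates of a profile of pointwise size $\sim\lambda_k^{m-1}$ yields $\|u_k\|_{L^p}\gtrsim c_k\lambda_k^{N+m-1}$. Truncation by a fixed cutoff $\chi$ does not rescue the argument: on the annulus where $D\chi\neq0$ one has $|D^m(\chi u_k)|\sim c_k\lambda_k^{m-1}$, contributing $(c_k\lambda_k^{m-1})^r$ to the minor integral with uncontrolled sign, and this already matches the concentrated piece.

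The paper's proof (via Propositions~\ref{hm-pro-41} and~\ref{hm-pro-42}) abandons the fully decoupled template. For the regularity defect $p>r$, $s<m-\tfrac{m}{r}$ it takes bounded oscillatory components $u_k^i=k^{-\rho}\sin(kx_i)$ for $i<r$ but couples the last one, $u_k^r=k^{-\rho}x_r^m\prod_{j<r}\sin(\tfrac{m\pi}{2}+kx_j)$; this forces the hyper-minor to equal $m!\,k^{m(r-1)-\rho r}\prod_{j<r}\sin^2(\tfrac{m\pi}{2}+kx_j)$, a \emph{nonnegative} quantity whose integral does not cancel. A fully decoupled sinusoid would give an oscillatory minor with $\int M\psi=O(k^{-\infty})$, so this coupling is exactly the idea your plan is missing. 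For the scaling defect $p\le r$ (Proposition~\ref{hm-pro-43}) the paper concentrates a \emph{compactly supported} $g$ so that the norm scales correctly as $\varepsilon^{\rho+N/p-s}$, accepts that $\int M^\beta_{\bm\alpha}(D^mg)\,dx=0$, and tests instead against $\psi(x)=|x|^m+O(|x|^{m+1})$; the required nondegeneracy $\int M^\beta_{\bm\alpha}(D^mg)|x|^m\,dx\neq0$ is a genuine obstruction, verified in the paper only for $m\le2$ (Lemma~\ref{hm-lem-3}). Finally, your side remark that $W^{s,p}\subseteq W^{s,r}$ on bounded domains for $p>r$ and non-integer $s$ is false: here $W^{s,q}=B^s_{q,q}$ and the second Besov index runs the wrong way (lacunary series give explicit counterexamples). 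The paper in fact builds a blow-up sequence precisely in that regime (Proposition~\ref{hm-pro-42}).
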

 one still unanswered question is  whether the above optimal results  hold in case (\ref{hm-thm-2-for-1}).  We give some discuss  in Sec. 4 and give  positive answers in case $m=1$ and $2$. Indeed

\begin{theorem}\label{hm-thm-3}
Let $m=1~\mbox{or}~2$ and $r,s,p$ be as in Theorem \ref{hm-thm-2}. Then there exist a sequence $\{u_k\}_{k=1}^{\infty} \subset C^{m}(\overline{\Omega}, \mathbb{R}^n)$, multi-indices  $\beta\in I(r,n)$, $\bm{\alpha}=(\alpha^1,\alpha^2,\cdot\cdot\cdot,\alpha^m)$ with $\alpha^j\in I(r,N)$ and a function $\psi\in C_c^{\infty}(\Omega)$ such that (\ref{hm-thm-2-for-2}) holds.
\end{theorem}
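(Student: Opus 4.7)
The plan is to produce the counterexample sequence directly, by a concentration argument: superpose many disjointly-supported rescaled copies of a single smooth model function, at a scale so fine that the $m$th-Jacobian minor integral adds up to infinity while the fractional Sobolev norm tends to zero. The case actually new here is the residual one left by Theorem \ref{hm-thm-2}, namely $1<r<p$ with $s=m-m/r$ non-integer; in that regime the failure of $W^{s,p}\subseteq W^{m-m/r,r}$ is exactly the non-embedding $W^{s,p}(\Omega,\mathbb{R}^n)\nsubseteq W^{s,r}(\Omega,\mathbb{R}^n)$, and it is precisely this non-embedding that the construction exploits.

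I would first reduce to the full hyper-Jacobian determinant in the lowest non-trivial dimension. Taking $\beta=\alpha^1=\cdots=\alpha^m=(1,2,\ldots,r)$ and building $u_k$ to depend only on $x'=(x_1,\ldots,x_r)$ with non-vanishing values only in the first $r$ components (or, in the scalar Hessian setting of Corollary \ref{hm-cor-3-1}, as a scalar function of $x'$), the minor $M^\beta_{\bm\alpha}(D^mu_k)$ becomes the usual $r\times r$ hyper-Jacobian determinant of a map $\mathbb{R}^r\to\mathbb{R}^r$ (resp.\ $\mathbb{R}$). Choosing a tensor-product test function $\psi$, the required estimate on $\Omega\subset\mathbb{R}^N$ reduces to the corresponding estimate on an $r$-dimensional cross-section, putting us in the framework of \cite{BN} for $m=1$ and \cite{BJ} for $m=2$.

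Next, fix $\phi\in C_c^\infty$ with $\int\det(D^m\phi)\,dx\neq 0$ and set $\phi_\lambda(x)=\lambda^{-\sigma}\phi(\lambda x)$. Both the integral of $\det(D^m\phi_\lambda)$ and the Gagliardo-type seminorms entering $\|\cdot\|_{W^{s,p}}$ (either $[\phi_\lambda]_{W^{s,p}}$ for $s<1$, or $[D^{[s]}\phi_\lambda]_{W^{s-[s],p}}$ for $s>1$) become explicit monomials in $\lambda$. Placing $M_k$ translates $\phi_{\lambda_k}(\cdot-x_{k,j})$ on a sublattice of spacing $\delta_k$ inside $\Omega$, disjointness of supports makes $\int M^\beta_{\bm\alpha}(D^mu_k)\psi\,dx$ equal to $M_k$ times a fixed single-bubble contribution, while
\[
[u_k]_{W^{s,p}}^p \leq \sum_{j=1}^{M_k}[\phi_{\lambda_k}(\cdot-x_{k,j})]_{W^{s,p}}^p + \sum_{j\neq j'}\iint_{B_j\times B_{j'}}\frac{|\phi_{\lambda_k}(x-x_{k,j})-\phi_{\lambda_k}(y-x_{k,j'})|^p}{|x-y|^{N+sp}}\,dx\,dy.
\]
The cross-term, estimated via $|x-y|\gtrsim|x_{k,j}-x_{k,j'}|$ and the $L^\infty$-bound on $\phi_\lambda$, turns out to be smaller than the diagonal sum by a factor of order $(\lambda_k\delta_k)^{-(N+sp)}\to 0$ provided the bubbles are well-separated relative to their support size, i.e.\ $\lambda_k\delta_k\to\infty$.

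The remaining step is an algebraic check: the requirements $\|u_k\|_{W^{s,p}}\to 0$ and $\int M^\beta_{\bm\alpha}(D^mu_k)\psi\,dx\to\infty$ become a system of strict linear inequalities in $\log\lambda_k$, $\log M_k$, $\log\delta_k$ together with the free parameter $\sigma$, and the hypothesis $W^{s,p}\nsubseteq W^{s,r}$ combined with $p>r$ opens a non-empty window of feasible choices. The main obstacle is the cross-term estimate: for $m=1$ it is essentially the computation carried out in \cite[Sec.\,3]{BN}; for $m=2$ one has to apply the analogous analysis to the $W^{s-1,p}$-seminorm of $Du_k$, which introduces one extra derivative per atom and requires the refined packing of \cite[Sec.\,4]{BJ}. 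For $m\geq 3$ the corresponding higher-order atom interactions are not controllable by this disjoint-support technique, which is precisely why the theorem is restricted to $m=1,2$.
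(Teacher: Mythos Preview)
Your proposal has a genuine gap at its starting point: the condition $\int\det(D^m\phi)\,dx\neq 0$ that you impose on the building block is \emph{impossible} for any $\phi\in C_c^\infty$. Every hyper-Jacobian minor $M^\beta_{\bm\alpha}(D^m\phi)$ is an $m$-fold divergence---this is precisely the null structure underlying Lemma~\ref{hm-lem-3-1}---so its integral against any polynomial of degree less than $m$ vanishes; the first non-trivial pairing is with the $m$-th Taylor coefficient of $\psi$ at the centre of the bubble, which costs an extra factor $\lambda_k^{-m}$ relative to your count. Once that factor is inserted, the ``non-empty window'' you claim at the critical exponent $s=m-m/r$ collapses: the three constraints (blow-up of the minor, decay of the $W^{s,p}$-norm, and the packing bound $M_k\lesssim\lambda_k^r$) are simultaneously satisfiable only if $\sigma>s$ and $\sigma<s$. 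A single-scale array of disjoint bubbles cannot detect the merely logarithmic failure of the endpoint embedding. The paper treats this regime with a genuinely multi-scale object, a lacunary trigonometric sum (Proposition~\ref{hm-pro-42}), and that argument already works for every $m\ge 1$---so this is \emph{not} the case responsible for the restriction to $m\in\{1,2\}$.

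The case that does force $m\le 2$ is the opposite regime $1<p\le r$, handled in Proposition~\ref{hm-pro-43} by a \emph{single} concentrating atom $u_\varepsilon=\varepsilon^\rho g(x/\varepsilon)$ tested against $\psi(x)=|x|^m+O(|x|^{m+1})$. The non-degeneracy one needs there is the weighted integral
\[
\int M^\beta_{\bm\alpha}(D^mg)\,|x|^m\,dx\neq 0,
\]
not the unweighted one, and it is the explicit verification of this weighted non-vanishing that is presently available only for $m=1$ (from \cite{BN}) and $m=2$ (Lemma~\ref{hm-lem-3}, via a radial profile and Lemma~\ref{hm-lem-2-2}). Your cross-term packing analysis plays no role---one atom suffices---and the obstruction for $m\ge 3$ is algebraic rather than analytic, contrary to your last paragraph.
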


Furthermore, we give  reinforced versions of optimal results, see Theorem \ref{hm-thm-4-14}, for $u\in W^{2-\frac{2}{r},r}(\Omega)$ with $1<r\leq N$. we expect that there are reinforced versions of optimal results for $W^{m-\frac{m}{r},r}(\Omega)$($m>2$), for instance  there exist a sequence $\{u_k\}_{k=1}^{\infty} \subset C^{m}(\overline{\Omega})$ and a function $\psi\in C_c^{\infty}(\Omega)$ such that
\begin{equation}
\lim_{k\rightarrow \infty} \|u_k\|_{s,p} =0, ~~~~\lim_{k\rightarrow \infty} \int_{\Omega} M_{\bm{\alpha}}(D^mu) \psi dx=\infty
\end{equation}
 for any $s,p$ with $W^{s,p}(\Omega) \nsubseteq W^{m-\frac{m}{r},r}(\Omega)$.

This paper is organized as follows. Some facts and  notion about higher dimensional determinant and  hyper-Jacobian are given in Section 2.
In Section 3 we establish the  weak continuity results and definitions for distributional hyper-Jacobian minors in fractional Sobolev space.
Then we turn to the question about optimality and get some positive results in Section 4.

\section{Higher dimensional determinants}

In this section we collect some notation and preliminary results for hyper-Jacobian determinants and  minors.  Fist we recall some notation and facts about about ordinary determinants and minors, whereas further details can be found in \cite{GMS}.

Fix $0\leq k\leq n$, we shall use the standard notation for ordered multi-indices
\begin{equation}\label{subnotation01}
I(k,n):=\{\alpha=(\alpha_1,\cdot\cdot\cdot,\alpha_k) \mid \alpha_i  ~\mbox{integers}, 1\leq \alpha_1 <\cdot\cdot\cdot< \alpha_k\leq n\},
\end{equation}
where $n \geq 2$. Set $I(0,n)=\{0\}$ and $|\alpha|=k$ if $\alpha \in I(k,n)$.
For $\alpha\in I(k,n)$,
\begin{enumerate}
\item[{\em(\romannumeral1)}]   $\overline{\alpha}$ is the element in $I(n-k,n)$ which complements $\alpha$  in $\{1,2,\cdot\cdot\cdot,n\}$ in the natural increasing order.
\item [{\em(\romannumeral2)}] $\alpha-i$ means the multi-index of length $k-1$ obtained by removing $i$ from $\alpha$ for any  $i \in \alpha$.
\item [{\em(\romannumeral3)}] $\alpha+j$ means the multi-index of length $k+1$ obtained by adding j to $\alpha$ for  any $j\notin \alpha$, .
\item [{\em(\romannumeral4)}] $\sigma(\alpha,\beta)$ is the sign of the permutation which reorders $(\alpha,\beta)$ in the natural increasing order for any   multi-index  $\beta$ with $\alpha\cap \beta=\emptyset$. In particular set $\sigma(\overline{0},0):=1$.
\end{enumerate}
Let $n,N \geq 2$ and  $A=(a_{ij})_{n \times N}$ be an $n \times N$ matrix.
Given two ordered multi-indices $\alpha\in I (k,N)$ and $\beta \in I(k,n)$, then
$A_{\alpha}^{\beta}$ denotes
the $k \times k $-submatrix of $A$ obtained by selecting the rows and columns by $\beta$ and $\alpha$, respectively. Its determinant will be denoted by
\begin{equation}
M_{\alpha}^{\beta}(A):=\det A_{\alpha}^{\beta},
\end{equation}
and we set $M_{0}^{0}(A):=1$.
The adjoint of $A_{\alpha}^{\beta}$ is  defined  by the formula
$$(\mbox{adj}~ A_{\alpha}^{\beta})_j^i:= \sigma(i,\beta-i) \sigma(j,\alpha-j) \det A_{\alpha-j}^{\beta-i},~~~~ i \in \beta, j\in \alpha.$$
So Laplace  formulas can be written as
$$M_{\alpha}^{\beta}(A)= \sum_{j \in \alpha} a_{ij} (\mbox{adj}~ A_{\alpha}^{\beta})_j^i,~~~~ i\in\beta.$$

Next we pay attention to the higher dimensional matrix and determinant.

An $m$-dimensional matrix $\bm{A}$ of order $N^m$ is a hypercubical array of $N^m$  as
\begin{equation}
\bm{A}=(a_{l_1l_2\cdot\cdot\cdot l_m})_{N\times\cdot\cdot\cdot\times N},
\end{equation}
where the index  $l_i\in \{1,\cdot\cdot\cdot N\}$ for any $1\leq i\leq m$.

\begin{definition}\label{hm-def-2-1}
Let $\bm{A}$ be an $m$-dimensional matrix,  then the (full signed) determinant of $\bm{A}$ is the number
\begin{equation}
\det \bm{A}=\sum_{\tau_2,\cdot\cdot\cdot,\tau_m\in S_N} \Pi_{s=2}^m\sigma(\tau_s) a_{1\tau_2(1)\cdot\cdot\cdot\tau_m(1)} a_{2 \tau_2(2)\cdot\cdot\cdot\tau_m(2)}\cdot\cdot\cdot a_{N\tau_2(N)\cdot\cdot\cdot\tau_m(N)},
\end{equation}
where $S_N$ is the permutation group of $\{1,2,\cdot\cdot\cdot,N\}$ and $\sigma(\cdot)$ is the sign of $\cdot$.
\end{definition}

For any $1\leq i\leq m$ and $1\leq j\leq N$, the $j$-th $i$-layer of $\bm{A}$, the  $(m-1)$-dimensional matrix denoted by $\bm{A}|_{l_i=j}$, which generalizing the notion of row and column for  ordinary matrices, is defined by
$$\bm{A}|_{l_i=j}:=(a_{l_1l_2\cdot\cdot l_{i-1}jl_{i+1}\cdot\cdot\cdot l_m)})_{N\times\cdot\cdot\cdot\times N}.$$
According to Definition \ref{hm-def-2-1}, we can easily obtain that

\begin{lemma}\label{hm-lem-2-11}
Let  $\bm{A}$  be an $m$-dimensional matrix and $1\leq i\leq m$.  $\bm{A}'$ is a matrix such that a pair of $i$-layers in $\bm{A}$ is interchanged, then
$$
\det \bm{A}'=
\begin{cases}
(-1)^{m-1}\det \bm{A}~~~~~~~~i=1,\\
-\det \bm{A}~~~~~~~~ i\geq 2.
\end{cases}$$
\end{lemma}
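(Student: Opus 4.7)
The plan is to split into the two cases $i=1$ and $i\geq 2$ and, in each case, implement the interchange of layers as a reindexing of the summation variables $\tau_2,\dots,\tau_m$ in Definition \ref{hm-def-2-1}; the sign of the transposition will then be extracted from the sign factors $\sigma(\tau_s)$.

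\textbf{Case $i\geq 2$.} Suppose the $j$-th and $k$-th $i$-layers of $\bm{A}$ are interchanged, producing $\bm{A}'$. Writing $\pi:=(j\,k)\in S_N$, the entries satisfy
$$a'_{l_1 l_2\cdots l_m}=a_{l_1\cdots l_{i-1}\,\pi(l_i)\,l_{i+1}\cdots l_m}.$$
Substituting this into the formula for $\det\bm{A}'$, the factor appearing with summation variable $\tau_i$ becomes $a_{\ell\cdots\pi(\tau_i(\ell))\cdots}$. Now introduce a new summation variable $\tau_i':=\pi\circ\tau_i\in S_N$; then $\pi(\tau_i(\ell))=\tau_i'(\ell)$ and $\sigma(\tau_i)=\sigma(\pi)\sigma(\tau_i')=-\sigma(\tau_i')$. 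Relabeling $\tau_s'=\tau_s$ for $s\neq i$ and noting that the map $\tau_i\mapsto\tau_i'$ is a bijection of $S_N$, one recovers the defining sum for $\det\bm{A}$ with an overall extra minus sign, i.e.\ $\det\bm{A}'=-\det\bm{A}$.

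\textbf{Case $i=1$.} Here the first index is not permuted in the defining sum; it runs sequentially over $\ell=1,\dots,N$. With $\pi=(j\,k)$, one has
$$a'_{l_1 l_2\cdots l_m}=a_{\pi(l_1)\,l_2\cdots l_m},$$
so the generic term of $\det\bm{A}'$ has the form $\prod_{\ell=1}^N a_{\pi(\ell)\,\tau_2(\ell)\cdots\tau_m(\ell)}$. Reindex the product by setting $\ell'=\pi(\ell)$, turning it into $\prod_{\ell'} a_{\ell'\,\tau_2(\pi(\ell'))\cdots\tau_m(\pi(\ell'))}$. Now substitute $\tau_s':=\tau_s\circ\pi$ for every $s=2,\dots,m$; each substitution yields $\sigma(\tau_s)=-\sigma(\tau_s')$, producing a total sign $(-1)^{m-1}$. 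Since $\tau_s\mapsto\tau_s'$ is a bijection of $S_N$ for each $s$, the resulting sum is exactly $\det\bm{A}$, giving $\det\bm{A}'=(-1)^{m-1}\det\bm{A}$.

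The source of the asymmetry between the two cases, and the one subtle point to handle carefully, is that the first index $l_1$ is distinguished in Definition \ref{hm-def-2-1}: it runs in the fixed natural order $1,2,\dots,N$, while the remaining $m-1$ indices are twisted by independent permutations. An interchange in one of those permuted directions can be absorbed by a single transposition on the corresponding $\tau_i$, yielding one sign change; an interchange in the privileged first direction, however, must be absorbed by relabeling the product index $\ell$, which simultaneously forces transpositions in all $m-1$ of the remaining $\tau_s$, yielding $(-1)^{m-1}$. Keeping the composition side (left vs.\ right) straight and verifying that the substitution really is a bijection of $S_N$ in each summation variable are the only bookkeeping items; once these are in place, the identities follow immediately from $\sigma(\pi)=-1$.
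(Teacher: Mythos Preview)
Your argument is correct and is exactly the natural way to unpack the claim from Definition~\ref{hm-def-2-1}; the paper itself omits the proof entirely, merely asserting that the lemma follows easily from that definition. Your substitution $\tau_i'=\pi\circ\tau_i$ in the case $i\geq2$ and the simultaneous substitutions $\tau_s'=\tau_s\circ\pi$ for all $s=2,\dots,m$ in the case $i=1$ are precisely the bookkeeping that justifies the paper's one-line remark.
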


For any $\bm{A}$ and $1\leq i<j\leq m$, the $(i,j)$-transposition of $\bm{A}$, denoting by $\bm{A}^{T(i,j)}$, is a $m$-dimensional matrix defined by
$$a'_{l_1,\cdot\cdot\cdot,l_i,\cdot\cdot\cdot,l_j,\cdot\cdot\cdot,l_m}=a_{l_1,\cdot\cdot\cdot,l_j,\cdot\cdot\cdot,l_i,\cdot\cdot\cdot,l_m}$$
for any $l_1,\cdot\cdot\cdot,l_m=1,\cdot\cdot\cdot,N$. where
$$\bm{A}^{T(i,j)}:=(a'_{l_1l_2\cdot\cdot\cdot\cdot\cdot l_m)})_{N\times\cdot\cdot\cdot\times N}.$$
Then we have
\begin{lemma}
Let  $\bm{A}$  be an $m$-dimensional matrix and $1\leq i<j\leq m$,  if $m$ is odd and $1<i<j\leq m$ or $m$ is even,  then
$$
\det \bm{A}^{T(i,j)}=\det \bm{A}.$$
\end{lemma}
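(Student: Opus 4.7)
The plan is to split the proof into two cases depending on whether the first index is one of the transposed ones, since the first coordinate plays a distinguished role in Definition~\ref{hm-def-2-1} (it is fixed to increase from $1$ to $N$, rather than being summed over permutations).

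Case~I: $i\geq 2$. Here both transposed axes are among those permuted by $\tau_2,\ldots,\tau_m$, so I would expand $\det\bm{A}^{T(i,j)}$ directly from the definition. Each entry $a'_{k,\tau_2(k),\ldots,\tau_i(k),\ldots,\tau_j(k),\ldots,\tau_m(k)}$ equals $a_{k,\tau_2(k),\ldots,\tau_j(k),\ldots,\tau_i(k),\ldots,\tau_m(k)}$, so after the relabeling $\tau'_i:=\tau_j$, $\tau'_j:=\tau_i$, and $\tau'_s:=\tau_s$ otherwise, the expression is precisely the expansion of $\det\bm{A}$. The signature product is a product of independent $\sigma(\tau_s)$ and the relabeling is a bijection on $(S_N)^{m-1}$, so no parity hypothesis is needed in this case. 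This dispatches the odd case (where $i>1$ is assumed by the statement) and also the portion of the even case with $i\geq 2$.

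Case~II: $i=1$, which occurs only in the even case. Now the first axis must itself be reindexed. Writing $\pi:=\tau_j$ and substituting $l=\pi(k)$, I would set $\tau'_j:=\pi^{-1}$ and $\tau'_s:=\tau_s\circ\pi^{-1}$ for $s\neq 1,j$; this converts each term in the expansion of $\det\bm{A}^{T(1,j)}$ into the standard form $a_{l,\tau'_2(l),\ldots,\tau'_m(l)}$, and the assignment $(\tau_2,\ldots,\tau_m)\mapsto(\tau'_2,\ldots,\tau'_m)$ is a bijection on $(S_N)^{m-1}$. Using $\sigma(\pi^{-1})=\sigma(\pi)$ together with multiplicativity of the sign, a short bookkeeping yields
\begin{equation*}
\prod_{s=2}^{m}\sigma(\tau_s)=\sigma(\pi)^{m-2}\prod_{s=2}^{m}\sigma(\tau'_s),
\end{equation*}
and for $m$ even the factor $\sigma(\pi)^{m-2}$ equals $1$, so $\det\bm{A}^{T(1,j)}=\det\bm{A}$. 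The main obstacle is precisely this sign bookkeeping: the parity factor $\sigma(\pi)^{m-2}$ arises directly from the distinguished first axis, and it collapses to $1$ exactly when $m$ is even. If $m$ were odd with $i=1$, the leftover $\sigma(\pi)$ would not cancel and the identity would genuinely fail, which is why the statement excludes that configuration.
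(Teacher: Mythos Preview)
Your proposal is correct and follows essentially the same approach as the paper. The paper reduces to the single case $m$ even, $i=1$, $j=2$ (treating the $i\geq 2$ cases as immediate from the definition) and carries out the same reindexing $\tau'_2=\tau_2^{-1}$, $\tau'_s=\tau_s\circ\tau_2^{-1}$ for $s\geq 3$, arriving at the identical parity factor $(\sigma(\tau_2))^{m-2}$; your version is slightly more complete in that you handle general $j$ directly and write out Case~I explicitly.
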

\begin{proof}
According to the definition of the $m$-dimensional determinant, we only to show the claim in case $m$ is even ,$i=1$ and $j=2$.
\begin{equation*}
\begin{split}
\det \bm{A}&=\sum_{\tau_2,\cdot\cdot\cdot,\tau_m\in S_N} \Pi_{s=2}^m\sigma(\tau_s) a_{1\tau_2(1)\cdot\cdot\cdot\tau_m(1)} a_{2 \tau_2(2)\cdot\cdot\cdot\tau_m(2)}\cdot\cdot\cdot a_{N\tau_2(N)\cdot\cdot\cdot\tau_m(N)}\\
&=\sum_{\tau_2,\cdot\cdot\cdot,\tau_m\in S_N} \Pi_{s=2}^m\sigma(\tau_s) a_{\tau_2^{-1}(1)1\tau_3\circ \tau^{-1}_2(1)\cdot\cdot\cdot\tau_m\circ\tau_2^{-1}(1)} a_{\tau_2^{-1}(2) 2\tau_3\circ\tau_2^{-1}(2)\cdot\cdot\cdot\tau_m\circ\tau_2^{-1}(2)}\cdot\cdot\cdot a_{\tau_2^{-1}(N) N \tau_3\circ\tau_2^{-1}(N)\cdot\cdot\cdot\tau_m\circ \tau_2^{-1}(N)}\\
&=\sum_{\tau_2,\cdot\cdot\cdot,\tau_m\in S_N} (\sigma(\tau_2))^{m-2} \sigma(\tau^{-1}_2)\sigma(\tau_3\circ\tau^{-1}_2)\cdot\cdot\cdot \sigma(\tau_m\circ\tau^{-1}_2)\\
&\cdot a'_{1\tau_2^{-1}(1)\tau_3\circ \tau^{-1}_2(1)\cdot\cdot\cdot\tau_m\circ\tau_2^{-1}(1)} a'_{2\tau_2^{-1}(2) \tau_3\circ\tau_2^{-1}(2)\cdot\cdot\cdot\tau_m\circ\tau_2^{-1}(2)}\cdot\cdot\cdot a'_{N\tau_2^{-1}(N) \tau_3\circ\tau_2^{-1}(N)\cdot\cdot\cdot\tau_m\circ \tau_2^{-1}(N)}\\
&=\sum_{\tau'_2,\cdot\cdot\cdot,\tau'_m\in S_N} \Pi_{s=2}^m\sigma(\tau'_s) a'_{1\tau'_2\tau'_3(1)\cdot\cdot\cdot\tau'_m(1)} a'_{2\tau'_2(2) \tau'_3(2)\cdot\cdot\cdot\tau'_m(2)}\cdot\cdot\cdot a'_{N\tau'_2(N) \tau'_3(N)\cdot\cdot\cdot\tau'_m(N)}.
\end{split}
\end{equation*}
\end{proof}

More generally, suppose  $\bm{A}$ be  an $m$-dimensional matrix of order $N_1\times \cdot\cdot\cdot\times N_m$,  $1\leq r\leq \min\{N_1,\cdot\cdot\cdot,N_m\}$, and an type of multi-index $\bm{\alpha}=(\alpha^1,\alpha^2,\cdot\cdot\cdot,\alpha^m)$ where $\alpha^j:=(\alpha^j_1,\cdot\cdot,\cdot, \alpha^j_r)$, $\alpha^j_i \in \{1,2,\cdot\cdot\cdot,N_j\}$ and $\alpha^j_{i_1}\neq \alpha^j_{i_2}$ for $i_1\neq i_2$. Define the $\bm{\alpha}$-minor of $\bm{A}$, denoted by $\bm{A}_{\bm{\alpha}}$, to be the $m$-dimensional matrix of order $r^m$  as
$$\bm{A}_{\bm{\alpha}}=(b_{l_1l_2\cdot\cdot\cdot l_m})_{r\times\cdot\cdot\cdot\times r},$$
where $b_{l_1l_2\cdot\cdot\cdot l_m}:=a_{\alpha^1_{l_1}\alpha^2_{l_2}\cdot\cdot\cdot\alpha^m_{l_m}}$.  Its determinant will be denoted by
\begin{equation}
M_{\bm{\alpha}}(\bm{A}):=\det \bm{A}_{\bm{\alpha}}.
\end{equation}
If $\alpha^j$ is not increasing, let $\widetilde{\alpha^j}$ be the increasing multi-indices generated by $\alpha^j$ and $\widetilde{\bm{\alpha}}:=(\widetilde{\alpha^1},\cdot\cdot\cdot,\widetilde{\alpha^m})$, then Lemma \ref{hm-lem-2-11} implies that  $M_{\bm{\alpha}}(\bm{A})$  and $M_{\widetilde{\bm{\alpha}}}(\bm{A})$ differ only by a sign.  Without loss of generality,  we can  assume $\bm{\alpha}=(\alpha^1,\alpha^2,\cdot\cdot\cdot,\alpha^m)$ with $\alpha^j\in I(r,N_j)$. Moreover we set $M_{\bm{0}}(\bm{A}):=1$.

Next we pay attention to hyper-Jacobian determinants and minors for a  map  $u\in C^{m}(\Omega, \mathbb{R}^n)$. We will denote by $D^mu$  the hyper-Jacobian matrix of $u$, more precisely, $D^m u$  is a $(m+1)$-dimensional matrix with order $n\times N\times\cdot\cdot\cdot\times N$ given by
$$D^mu:=(a_{l_1l_2\cdot\cdot\cdot l_{m+1}})_{n\times N\times\cdot\cdot\cdot\times N}$$
where
$$a_{l_1l_2\cdot\cdot\cdot l_{m+1}}=\partial_{l_2}\partial_{l_3}\cdot\cdot\cdot\partial_{l_{m+1}} u^{l_1}.$$
Then for any $\beta\in I(r,n)$, $\bm{\alpha}=(\alpha^1,\alpha^2,\cdot\cdot\cdot,\alpha^m)$ with $\alpha^j\in I(r,N)$ and $1\leq r\leq \min\{n,N\}$, the  $m$th-Jacobian $(\beta, \bm{\alpha})$-minor of $u$, denoted by $M^{\beta}_{\bm{\alpha}}(D^mu)$, is  the determinant of the $(\beta, \bm{\alpha})$- minor of $D^mu$, i.e.,
\begin{equation}\label{hm-pre-for-1}
M^{\beta}_{\bm{\alpha}}(D^mu):=M_{(\beta,\bm{\alpha})}(D^m u).
\end{equation}
In particular if $N=n$ and $\beta=\alpha^1=\cdot\cdot\cdot=\alpha^m=\{1,2,\cdot\cdot\cdot,N\}$, $\det (D^mu)$ is called the $m$-th Jacobian determinant of $u$.
Similarly,  the hyper-Jacobian matrix $D^m u$ of $u\in C^{m}(\Omega)$ is a $m$-dimensional matrix with order $N\times\cdot\cdot\cdot\times N$ and the $m$th-Jacobian $\bm{\alpha}$-minor of $u$ is defined by $M_{\bm{\alpha}}(D^mu)$.


In order to prove the main results, some lemmas, which can be easily manipulated by the definition of hyper-Jacobian minors, are introduced as follows.

\begin{lemma}\label{hm-lem-2-2}
Let $u=(v,\cdots,v)\in C^{m}(\Omega, \mathbb{R}^n)$ with $v\in C^{m}(\Omega)$. For any $\beta\in I(r,n)$ and $\bm{\alpha}=(\alpha^1,\alpha^2,\cdot\cdot\cdot,\alpha^m)$ with $\alpha^j\in I(r,N)$, $1\leq r\leq \underline{n}$
$$M^{\beta}_{\bm{\alpha}}(D^mu)=
\begin{cases}
r! M_{\bm{\alpha}}(D^mv)~~~~m~\mbox{is even},\\
0~~~~~~~~~~~~~~~~~m~\mbox{is odd}.
\end{cases}
$$
\end{lemma}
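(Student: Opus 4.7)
The plan is to exploit the fact that when $u=(v,\ldots,v)$, the entries of the submatrix $(D^mu)_{\bm{\alpha}}^{\beta}$ are independent of their first index: indeed, $a_{l_1l_2\cdots l_{m+1}}=\partial_{l_2}\cdots\partial_{l_{m+1}}u^{l_1}=\partial_{l_2}\cdots\partial_{l_{m+1}}v$. Equivalently, every $1$-layer of the $(m+1)$-dimensional submatrix $(D^mu)_{\bm{\alpha}}^{\beta}$ coincides with the single $m$-dimensional matrix $\bm{C}:=(D^mv)_{\bm{\alpha}}$, whose determinant is $M_{\bm{\alpha}}(D^mv)$.

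For $m$ odd (with $r\geq 2$), I would appeal to Lemma \ref{hm-lem-2-11}: interchanging any two $1$-layers of the $(m+1)$-dimensional submatrix $(D^mu)_{\bm{\alpha}}^{\beta}$ multiplies its determinant by $(-1)^{(m+1)-1}=(-1)^m=-1$. Since the two layers are in fact identical, the swap leaves the matrix unchanged, so $M_{\bm{\alpha}}^{\beta}(D^mu)=-M_{\bm{\alpha}}^{\beta}(D^mu)$, forcing the quantity to vanish.

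For $m$ even, I would expand directly from Definition \ref{hm-def-2-1}. Using layer-independence in $l_1$, the sum takes the form
$$M_{\bm{\alpha}}^{\beta}(D^mu)=\sum_{\tau_2,\ldots,\tau_{m+1}\in S_r}\prod_{s=2}^{m+1}\sigma(\tau_s)\prod_{k=1}^{r}c_{\tau_2(k),\tau_3(k),\ldots,\tau_{m+1}(k)}.$$
The key manipulation is the substitution $\tau_s':=\tau_s\circ\tau_2^{-1}$ for $s\geq 3$, which reindexes the inner product (via $j=\tau_2(k)$) into $\prod_{j=1}^{r}c_{j,\tau_3'(j),\ldots,\tau_{m+1}'(j)}$ while converting the sign product into $\sigma(\tau_2)^{m}\prod_{s\geq 3}\sigma(\tau_s')$. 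Because $m$ is even, $\sigma(\tau_2)^{m}\equiv 1$, so the sum over $\tau_2\in S_r$ factors out as $|S_r|=r!$, and the remaining sum over $\tau_3',\ldots,\tau_{m+1}'$ is, by Definition \ref{hm-def-2-1} applied to the $m$-dimensional matrix $\bm{C}$, exactly $\det\bm{C}=M_{\bm{\alpha}}(D^mv)$. This gives the claimed identity $M_{\bm{\alpha}}^{\beta}(D^mu)=r!\,M_{\bm{\alpha}}(D^mv)$.

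The argument has no real analytic depth; the only obstacle is symbolic bookkeeping — keeping track of how the signs of the $m-1$ remaining permutations recombine under the substitution, and ensuring the reindexed sum matches the definition of the $m$-dimensional determinant exactly.
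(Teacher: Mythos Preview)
The paper does not actually prove this lemma; it merely remarks that Lemmas \ref{hm-lem-2-2} and \ref{hm-lem-2-1} ``can be easily manipulated by the definition of hyper-Jacobian minors'' and states them without argument. Your proposal correctly supplies those details: the substitution $\tau_s'=\tau_s\circ\tau_2^{-1}$ is precisely the reindexing trick the paper uses in its proof of the $(i,j)$-transposition lemma, and the sign bookkeeping you describe is right, so your approach is the one the authors evidently had in mind. One small remark: your even-case computation in fact handles both parities simultaneously, since for $m$ odd the factor $\sum_{\tau_2\in S_r}\sigma(\tau_2)^m=\sum_{\tau_2\in S_r}\sigma(\tau_2)=0$ (for $r\geq 2$) gives the vanishing directly, without needing the separate appeal to Lemma \ref{hm-lem-2-11}.
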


\begin{lemma}\label{hm-lem-2-1}
Let $u\in C^{m}(\Omega, \mathbb{R}^n)$, $\beta\in I(r,n)$ and $\bm{\alpha}=(\alpha^1,\alpha^2,\cdot\cdot\cdot,\alpha^m)$ with $\alpha^j\in I(r,N)$, $1\leq r\leq \underline{n}$. Then for any $1\leq i\leq m$
\begin{equation}
M^{\beta}_{\bm{\alpha}}(D^mu)=\sum_{\tau_1,\cdot\cdot\cdot,\tau_{i-1},\tau_{i+1},\cdot\cdot\cdot,\tau_m\in S_r} \Pi_{s\in \overline{i}}\sigma(\tau_s) M^{\overline{0}}_{\alpha^i} (Dv(i)),
\end{equation}
where $M^{\overline{0}}_{\alpha^i}(\cdot)$ is the ordinary  minors and  $v(i)\in C^1(\Omega, \mathbb{R}^r)$ can be written as
$$v^j(i)=\partial_{\alpha^1_{\tau_1(j)}}\cdot\cdot\cdot \partial_{\alpha^{i-1}_{\tau_{i-1}(j)}}  \partial_{\alpha^{i+1}_{\tau_{i+1}(j)}} \cdot\cdot\cdot \partial_{\alpha^{m}_{\tau_{m}(j)}}u^{\beta_j},~~~~~~j=1,\cdots,r.$$
\end{lemma}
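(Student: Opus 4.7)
The plan is to derive the identity by a direct manipulation of the Leibniz-type expansion of $M^{\beta}_{\bm{\alpha}}(D^m u)$ coming from Definition \ref{hm-def-2-1}.

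First, applying Definition \ref{hm-def-2-1} to the $(m+1)$-dimensional submatrix $(D^m u)_{(\beta,\bm{\alpha})}$ of order $r^{m+1}$, whose $(j,k_1,\ldots,k_m)$-entry is $\partial_{\alpha^1_{k_1}}\cdots\partial_{\alpha^m_{k_m}}u^{\beta_j}$ and in which the $\beta$-index occupies the privileged ``first'' (unpermuted) slot of Definition \ref{hm-def-2-1}, I would write out
\begin{equation*}
M^{\beta}_{\bm{\alpha}}(D^m u)
= \sum_{\tau_1,\ldots,\tau_m\in S_r}\prod_{s=1}^m \sigma(\tau_s)\,
\prod_{j=1}^r \partial_{\alpha^1_{\tau_1(j)}}\cdots \partial_{\alpha^m_{\tau_m(j)}} u^{\beta_j}.
\end{equation*}
Since all $m$ derivative indices sit in the ``permuted'' positions of Definition \ref{hm-def-2-1}, they enter this expansion symmetrically, so one is free to single out any particular index $i$.

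Next, for the prescribed $i\in\{1,\ldots,m\}$, I would split the $m$-fold sum by freezing the permutations $\tau_s$ for $s\neq i$ and keeping $\tau_i$ inside. Using commutativity of partial derivatives and the shorthand $v^j(i)$ defined in the statement, the $j$-th factor of the inner product rewrites as $\partial_{\alpha^i_{\tau_i(j)}}v^j(i)$, giving
\begin{equation*}
M^{\beta}_{\bm{\alpha}}(D^m u)
= \sum_{\substack{\tau_s\in S_r\\ s\neq i}}\Bigl(\prod_{s\neq i}\sigma(\tau_s)\Bigr)\,
\sum_{\tau_i\in S_r}\sigma(\tau_i)\prod_{j=1}^r \partial_{\alpha^i_{\tau_i(j)}} v^j(i).
\end{equation*}

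Finally, I would recognize the inner sum over $\tau_i$ as the Leibniz formula for the determinant of the $r\times r$ matrix whose $(j,k)$-entry is $\partial_{\alpha^i_k}v^j(i)$; this is precisely the submatrix of the ordinary Jacobian $Dv(i)\in\mathbb{R}^{r\times N}$ obtained by selecting all $r$ rows (indexed by $\overline{0}$) and the columns indexed by $\alpha^i$. By the notation introduced in Section~2, its determinant is $M^{\overline{0}}_{\alpha^i}(Dv(i))$, and substituting back yields the claimed identity. The argument is purely combinatorial; no serious obstacle is anticipated, the only care needed being the verification that the derivative indices play symmetric roles in Definition \ref{hm-def-2-1}, which justifies the freedom in choosing $i$.
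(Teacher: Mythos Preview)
Your proposal is correct and is precisely the direct manipulation the paper has in mind: the paper states this lemma without proof, noting only that it ``can be easily manipulated by the definition of hyper-Jacobian minors,'' and your expansion via Definition~\ref{hm-def-2-1}, regrouping by freezing the $\tau_s$ with $s\neq i$, and identification of the inner sum with the Leibniz formula for $M^{\overline{0}}_{\alpha^i}(Dv(i))$ is exactly that manipulation.
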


\section{Hyper-jacobians in fractional Sobolev spaces}
In this section we establish the weak continuity results for the Hyper-jacobian minors in the fractional Sobolev spaces $W^{m-\frac{m}{q},q}(\Omega, \mathbb{R}^n)$.

 Let $\bm{\alpha}=(\alpha^1,\alpha^2,\cdot\cdot\cdot,\alpha^m)$ with $\alpha^j \in I(r,N)$, we set
 $$\bm{\widetilde{\alpha}}=(\alpha^1+(N+1),\cdot\cdot\cdot,\alpha^m+(N+m)), R(\bm{\widetilde{\alpha}}):=\{(i_1,\cdot\cdot\cdot,i_m)\mid i_j\in \alpha^j+(N+j)\}.$$
 For any $I=(i_1,\cdot\cdot\cdot,i_m)\in R(\bm{\widetilde{\alpha}})$,
$$\widetilde{\bm{\alpha}}-I:=(\alpha^1+(N+1)-i_1,\cdot\cdot\cdot,\alpha^m+(N+m)-i_m);$$
$$\sigma(\widetilde{\bm{\alpha}}-I,I):=\Pi_{s=1}^m \sigma(\alpha^s+(N+s)-i_s,i_s);$$
$$\partial_I:=\partial_{x_{i_1}}\cdot\cdot\cdot\partial_{x_{i_m}};~~~~ \widetilde{x}:=(x_1,\cdots,x_N,x_{N+1},\cdots,x_{N+m}).$$
We begin with the following simple lemma:
\begin{lemma}\label{hm-lem-3-1}
Let $u \in C^m(\Omega, \mathbb{R}^n)$, $\psi\in C_c^m(\Omega)$, $0\leq r\leq \underline{n}:=\min\{n,N\}$, $\beta\in I(r,n)$ and  $\bm{\alpha}=(\alpha^1,\alpha^2,\cdot\cdot\cdot,\alpha^m)$ with $\alpha^j \in I(r,N)$ ($1\leq j\leq m$), then
\begin{equation}\label{hm-lem-3-1}
\int_{\Omega} M_{\bm{\alpha}}^{\beta}(D^mu)\psi dx=\sum_{I\in R(\bm{\widetilde{\alpha}})}(-1)^m\sigma(\widetilde{\bm{\alpha}}-I,I)\int_{\Omega\times [0,1)^m} M_{\widetilde{\bm{\alpha}}-I}^{\beta}(D^mU) \partial_{I} \Psi d\widetilde{x},
\end{equation}
for any extensions $U\in C^m(\Omega\times[0,1)^m,\mathbb{R}^n)\cap C^{m+1}(\Omega\times (0,1)^m,\mathbb{R}^n)$ and $\Psi\in C^m_c(\Omega\times[0,1)^m,\mathbb{R})$ of $u$ and $\psi$, respectively.
\end{lemma}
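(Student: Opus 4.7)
The plan is to establish the identity by iterated integration by parts in the auxiliary coordinates $x_{N+1},\ldots,x_{N+m}$, processing one ``slot'' at a time. The $j$-th step strips the slot-$j$ derivative from $\Psi$ and restricts $x_{N+j}$ to $0$, while the interior contribution is forced to vanish by a Piola-type identity for hyper-Jacobian cofactors.

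First, I would isolate the following auxiliary cofactor identity: for any $V\in C^{m+1}(\widetilde{\Omega},\mathbb{R}^n)$, any $j_0\in\{1,\ldots,m\}$, any fixed $\delta^{j}\in I(r,M)$ for $j\neq j_0$, and any $\gamma\in I(r+1,M)$,
$$\sum_{i\in\gamma}\sigma(\gamma-i,i)\,\partial_{i}\, M^{\beta}_{(\delta^1,\ldots,\delta^{j_0-1},\gamma-i,\delta^{j_0+1},\ldots,\delta^m)}(D^mV)=0.$$
To prove this ``hyper-Piola identity'', I expand each hyper-Jacobian minor via Lemma~\ref{hm-lem-2-1} at slot $j_0$ as $\sum_{\{\tau_s\}_{s\neq j_0}}\prod_{s\neq j_0}\sigma(\tau_s)\,M^{\bar{0}}_{\gamma-i}(Dv(j_0))$, where the auxiliary map $v(j_0)\in C^1(\widetilde{\Omega},\mathbb{R}^r)$ depends on the fixed $\delta^s$ but \emph{not} on $i$. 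The sum over $i$ then factors past $v(j_0)$ and reduces to the classical Piola identity $\sum_{i\in\gamma}\sigma(\gamma-i,i)\partial_i M^{\bar{0}}_{\gamma-i}(Dv(j_0))=0$, which is just the vanishing of $d(dv(j_0)^1\wedge\cdots\wedge dv(j_0)^r)$.

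Granting this, I would process slot~$1$ by integration by parts in $\partial_{i_1}$ inside the RHS, keeping the remaining $i_2,\ldots,i_m$ fixed. Summed over $i_1\in\alpha^1+(N+1)$, the interior term is precisely the hyper-Piola expression in slot~$1$ and hence vanishes. Because $\Psi$ has compact support in $\Omega$ and vanishes at each $x_{N+j}=1$, a boundary contribution survives only at $x_{N+1}=0$ and only from $i_1=N+1$; the associated sign is $\sigma(\alpha^1,N+1)=1$ since $N+1$ exceeds every entry of $\alpha^1\subset\{1,\ldots,N\}$. The derivatives remaining in the resulting minor do not involve $x_{N+1}$, so restriction to $x_{N+1}=0$ commutes with them, and what is left is an identity of the same structural form as the statement, but over $\Omega\times[0,1)^{m-1}$ with slot~$1$ fixed to $\alpha^1$. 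Iterating this reduction for slots $2,\ldots,m$ restores each $\alpha^j$ in turn, replaces $U,\Psi$ by $u,\psi$, and picks up a total boundary sign of $(-1)^m$. Combined with the prefactor $(-1)^m$ of the RHS, this produces exactly $\int_\Omega M^\beta_{\bm{\alpha}}(D^m u)\,\psi\,dx$, which is the LHS.

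The main technical obstacle will be verifying the hyper-Piola identity with all signs in place: one must track the factors $\sigma(\gamma-i,i)$ as they arise in the hyper-Jacobian minor and match them to those in the classical Piola identity after the Lemma~\ref{hm-lem-2-1} reduction, and confirm that the factorization $M^{\bar{0}}_{\gamma-i}(Dv(j_0))$ really does isolate the $i$-dependence from the permutation-averaged derivative structure. Once this is settled, the integration-by-parts bookkeeping in each slot and the commutation of $x_{N+j}$-restriction with the surviving derivatives are routine.
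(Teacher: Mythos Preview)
Your proposal is correct and uses essentially the same approach as the paper: both reduce the hyper-Jacobian minor to ordinary Jacobian minors via Lemma~\ref{hm-lem-2-1} and then invoke the classical Piola identity, processing one auxiliary coordinate $x_{N+j}$ at a time. The only difference is the direction of the iteration---you integrate by parts from the right-hand side down to the left-hand side, whereas the paper applies the fundamental theorem of calculus to build from the left-hand side up to the right-hand side---but the key mechanism (your ``hyper-Piola identity'' is exactly the paper's combination of Laplace expansion and $\sum_{i}\sigma(i,\alpha^1-i)\sigma(j,\overline{0}-j)\partial_i M^{\overline{0}-j}_{\alpha^1-i}(DV_1)=0$) is identical.
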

\begin{proof}
It is easy to show the results in case $r=0,1$ or $\underline{n}=1$. So we give the proof only for the case $2\leq r\leq \underline{n}$.  Denote
$$U_i:=\begin{cases}
U|_{x_{N+i+1}=\cdot\cdot\cdot=x_{N+m}=0},~~~~1\leq i\leq m-1,\\
U,~~~~i=m.
\end{cases}
\Psi_i:=\begin{cases}
\Psi|_{x_{N+i+1}=\cdot\cdot\cdot=x_{N+m}=0},~~~~1\leq i\leq m-1,\\
\Psi,~~~~i=m.
\end{cases}$$
$$\Omega_i:=\Omega\times [0,1)_{x_{N+1}}\times \cdot\cdot\cdot\times [0,1)_{x_{N+i}};~~~~\widetilde{x_i}:=(x, x_{N+1},\cdot\cdot\cdot x_{N+i}).$$
 Applying the fundamental theorem of calculus and the definition of $M_{\bm{\alpha}}^{\beta}(D^mu)$, we have
\begin{equation}\label{hm-lem-3-for-1}
\begin{split}
\int_{\Omega} M_{\bm{\alpha}}^{\beta}(D^mu)\psi dx&=-\int_{\Omega_1} \partial_{N+1} \left(M_{\bm{\alpha}}^{\beta}(D^mU_1)\Psi_1\right) d\widetilde{x_1}\\
&=-\int_{\Omega_1} \partial_{N+1}M_{\bm{\alpha}}^{\beta}(D^mU_1)\Psi_1 d\widetilde{x_1}-\int_{\Omega_1}M_{\bm{\alpha}}^{\beta}(D^mU_1)\partial_{N+1}\Psi_1 d\widetilde{x_1}.
\end{split}
\end{equation}
According to the Lemma \ref{hm-lem-2-1}, $M_{\bm{\alpha}}^{\beta}(D^mU_1)$ can be written as
\begin{equation*}
M_{\bm{\alpha}}^{\beta}(D^mU_1)= \sum_{\tau_2,\cdot\cdot\cdot,\tau_m\in S_r} \Pi_{s=2}^m\sigma(\tau_s) M^{\overline{0}}_{\alpha^1} (DV_1),
\end{equation*}
where $\overline{0}:=\{1,2,\cdot\cdot\cdot,r\}$ and
\begin{equation*}
V_1(\widetilde{x_1}):=(V_1^1(\widetilde{x_1}),\cdot\cdot\cdot,V_1^r(\widetilde{x_1})),~~~~V^j_1=\partial_{\alpha^2_{\tau_2(j)}}\cdot\cdot\cdot\partial_{\alpha^{m}_{\tau_{m}(j)}}u^{\beta_j}.
\end{equation*}
Then
\begin{equation}\label{hm-lem-3-for-12}
\int_{\Omega} M_{\bm{\alpha}}^{\beta}(D^mu)\psi dx=\sum_{\tau_2,\cdot\cdot\cdot,\tau_m\in S_r} \Pi_{s=2}^m\sigma(\tau_s) \left\{-\int_{\Omega_1} \partial_{N+1}M^{\overline{0}}_{\alpha^1} (DV_1)\Psi_1 d\widetilde{x_1}-\int_{\Omega_1}M^{\overline{0}}_{\alpha^1} (DV_1)\partial_{N+1}\Psi_1 d\widetilde{x_1}  \right\}.
\end{equation}
We denote the first part integral on the right-hand side by $I$,
Laplace formulas of the $2$-dimensional minors imply that
\begin{equation}
\begin{split}
I&=-\sum_{i\in \alpha^1} \sum_{j=1}^r \int_{\Omega_1} \sigma(i, \alpha^1-i) \sigma(j, \overline{0}-j)  \partial_{N+1}\partial_i V_1^j  M^{\overline{0}-j}_{\alpha^1-i} (DV_1)\Psi_1 d\widetilde{x_1}\\
&=\sum_{i\in \alpha^1} \sum_{j=1}^r \int_{\Omega_1} \sigma(i, \alpha^1-i) \sigma(j, \overline{0}-j)  \partial_{N+1} V_1^j \left(\partial_i M^{\overline{0}-j}_{\alpha^1-i} (DV_1)\Psi_1+ M^{\overline{0}-j}_{\alpha^1-i} (DV_1)\partial_i\Psi_1\right) d\widetilde{x_1}.\\
\end{split}
\end{equation}
Since
$$\sum_{i\in \alpha^1} \sigma(i, \alpha^1-i) \sigma(j, \overline{0}-j)\partial_i M^{\overline{0}-j}_{\alpha^1-i} (DV_1)=0$$
for any $j$, it follows that
\begin{equation}
\begin{split}
I&=\sum_{i\in \alpha^1} \sum_{j=1}^r \int_{\Omega_1} \sigma(i, \alpha^1-i) \sigma(j, \overline{0}-j)  \partial_{N+1} V_1^j M^{\overline{0}-j}_{\alpha^1-i} (DV_1)\partial_i\Psi_1 d\widetilde{x_1}\\
&=\sum_{i\in \alpha^1}  \int_{\Omega_1} \sigma(i, \alpha^1-i) \sigma(N+1, \alpha^1-i)   M^{\overline{0}}_{\alpha^1+(N+1)-i} (DV_1)\partial_i\Psi_1 d\widetilde{x_1}\\
&=-\sum_{i\in \alpha^1}  \int_{\Omega_1} \sigma(\alpha^1+(N+1)-i,i)  M^{\overline{0}}_{\alpha^1+(N+1)-i} (DV_1)\partial_i\Psi_1 d\widetilde{x_1}.
\end{split}
\end{equation}
Combing with (\ref{hm-lem-3-for-12}), we obtain that
\begin{equation*}
\int_{\Omega} M_{\bm{\alpha}}^{\beta}(D^mu)\psi dx= -\sum_{i_1\in \alpha^1+(N+1)} \sigma(\alpha^1+(N+1)-i_1,i_1) \sum_{\tau_2,\cdot\cdot\cdot,\tau_m\in S_r} \Pi_{s=2}^m\sigma(\tau_s) \int_{\Omega_1} M^{\overline{0}}_{\alpha^1+(N+1)-i_1} (DV_1)\partial_{i_1}\Psi_1 d\widetilde{x_1}.
\end{equation*}
For any $i_1\in \alpha^1+(N+1)$, we denote $\gamma:=\alpha^1+(N+1)-i_1$, then
\begin{equation}\label{hm-lem-3-for-2}
\begin{split}
&\sum_{\tau_2,\cdot\cdot\cdot,\tau_m\in S_r} \Pi_{s=2}^m\sigma(\tau_s) M^{\overline{0}}_{\alpha^1+(N+1)-i_1} (DV_1)=\sum_{\tau_1,\tau_2,\cdot\cdot\cdot,\tau_m\in S_r} \Pi_{s=1}^m\sigma(\tau_s) \partial_{\gamma_{\tau_1(1)}}V_1^1 \cdot\cdot\cdot  \partial_{\gamma_{\tau_1(r)}}V_1^r\\
&=\sum_{\tau_1,\tau_2,\cdot\cdot\cdot,\tau_m\in S_r} \Pi_{s=1}^m\sigma(\tau_s) \left(\partial_{\gamma_{\tau_1(1)}}\partial_{\alpha^2_{\tau_2(1)}} \cdot\cdot\cdot \partial_{\alpha^m_{\tau_m(1)}} U_1^{\beta_1}\right)\cdot\cdot\cdot \left( \partial_{\gamma_{\tau_1(r)}} \partial_{\alpha^2_{\tau_2(r)}} \cdot\cdot\cdot \partial_{\alpha^m_{\tau_m(r)}}  U_1^{\beta_r}\right)\\
&= M^{\beta}_{\bm{\alpha}(i_1)} (D^m U_1),
\end{split}
\end{equation}
where $\bm{\alpha}(i_1):=(\alpha^1+(N+1)-i_1,\alpha^2,\cdot\cdot\cdot,\alpha^m )$. Hence
\begin{equation}
\begin{split}
\int_{\Omega} M_{\bm{\alpha}}^{\beta}(D^mu)\psi dx&= -\sum_{i_1\in \alpha^1+(N+1)} \sigma(\alpha^1+(N+1)-i_1,i_1)  \int_{\Omega_1} M^{\beta}_{\bm{\alpha}(i_1)} (D^m U_1)\partial_{i_1}\Psi_1 d\widetilde{x_1}\\
&=\sum_{i_1\in \alpha^1+(N+1)} \sigma(\alpha^1+(N+1)-i_1,i_1)  \int_{\Omega_2} \partial_{N+2} \left( M^{\beta}_{\bm{\alpha}(i_1)} (D^m U_2)\partial_{i_1}\Psi_2\right) d\widetilde{x_2}.
\end{split}
\end{equation}
An easy induction and the  argument similar to the one used in (\ref{hm-lem-3-for-1})-(\ref{hm-lem-3-for-2}) shows that
 \begin{equation}
\begin{split}
\int_{\Omega} M_{\bm{\alpha}}^{\beta}(D^mu)\psi dx=(-1)^j\sum_{s=1}^j\sum_{i_s\in \alpha^s+(N+s)} \Pi_{s=1}^j \sigma(\alpha^s+(N+s)-i_s,i_s) \int_{\Omega_j} M_{\bm{\alpha}(i_1i_2\cdot\cdot\cdot i_j)}^{\beta}(D^mU_j)\partial_{i_1i_2\cdot\cdot\cdot i_j}\Psi_jd\widetilde{x_j}
\end{split}
\end{equation}
for any $1\leq j\leq m$,
where
$$\bm{\alpha}(i_1i_2\cdot\cdot\cdot i_j):=(\alpha^{1}+(N+1)-i_1,\cdot\cdot\cdot,\alpha^{j}+(N+j)-{i_j},\alpha^{j+1},\cdot\cdot\cdot,\alpha^{m}).$$
\end{proof}

\begin{lemma}\label{hm-lem-3-2}
Let $u,v \in C^m(\Omega, \mathbb{R}^n)$ and $\psi\in C_c^m(\Omega)$  and $2\leq q\leq  \underline{n}$. Then for any $1\leq r\leq q$, $\beta\in I(r,n)$ and $\bm{\alpha}=(\alpha^1,\alpha^2,\cdot\cdot\cdot,\alpha^m)$ with $\alpha^j \in I(r,N)$,
\begin{equation}
\left|\int_{\Omega} M_{\bm{\alpha}}^{\beta}(D^m u) \psi dx- \int_{\Omega} M_{\bm{\alpha}}^{\beta}(D^m v) \psi dx\right|\leq C\|u-v\|_{W^{m-\frac{m}{q},q}}(\|u\|^{r-1}_{W^{m-\frac{m}{q},q}}+ \|v\|^{r-1}_{W^{m-\frac{m}{q},q}}) \|D^m \psi\|_{L^{\infty}},
\end{equation}
the constant $C$ depending only on $q,r,m,n,N$ and $\Omega$.
\end{lemma}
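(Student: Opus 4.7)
My plan is to use the integration-by-parts identity from Lemma 3.1 to lift the integrals to $\Omega \times [0,1)^m$, and then bound the result via Hölder's inequality applied to the multilinear structure of the minor. First, I would fix a continuous right inverse of the trace operator producing extensions $U, V \in W^{m,q}(\Omega \times [0,1)^m, \mathbb{R}^n)$ of $u, v$ (from the standard trace theory of fractional Sobolev spaces); since $u, v$ are already $C^m$, one may take these extensions to be $C^m$ as well (e.g.\ as the convolution $U(x,y) = (u * \rho_{|y|})(x)$ of $u$ with a smooth mollifier of scale $|y|$), satisfying $\|U\|_{W^{m,q}} \leq C \|u\|_{W^{m-m/q,q}}$ and analogous bounds for $V$ and $U-V$. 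I would extend $\psi$ as a tensor product $\Psi(x, y) := \psi(x) \phi(y)$ with fixed $\phi \in C^\infty_c([0,1)^m, \mathbb{R})$ satisfying $\phi(0) = 1$, giving $\Psi \in C^\infty_c(\Omega \times [0,1)^m)$ with $\|D^m \Psi\|_{L^\infty} \leq C_\phi \|\psi\|_{C^m(\Omega)}$.

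Applying Lemma 3.1 to $(u, U)$ and $(v, V)$ against the same $\Psi$ and subtracting yields
\begin{equation*}
\int_{\Omega}\left[M^{\beta}_{\bm{\alpha}}(D^m u) - M^{\beta}_{\bm{\alpha}}(D^m v)\right] \psi\, dx = \sum_{I \in R(\widetilde{\bm{\alpha}})} (-1)^m \sigma(\widetilde{\bm{\alpha}} - I, I)\int_{\Omega \times [0,1)^m}\left[M^{\beta}_{\widetilde{\bm{\alpha}} - I}(D^m U) - M^{\beta}_{\widetilde{\bm{\alpha}} - I}(D^m V)\right] \partial_I \Psi\, d\widetilde{x}.
\end{equation*}
By an $(m+1)$-dimensional application of Lemma 2.1, each minor $M^{\beta}_{\widetilde{\bm{\alpha}} - I}(D^m W)$ is a finite signed sum of products $\prod_{j=1}^r \partial^m W^{\beta_j}$, and is hence $r$-multilinear in the components $W^{\beta_1}, \dots, W^{\beta_r}$. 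The telescoping identity
\begin{equation*}
\prod_{j=1}^r a_j - \prod_{j=1}^r b_j = \sum_{k=1}^r \Bigl(\prod_{j<k} a_j\Bigr)(a_k - b_k)\Bigl(\prod_{j>k} b_j\Bigr)
\end{equation*}
then expresses the difference as a sum of $r$ terms, each containing exactly one factor of $D^m(U - V)$ and $r - 1$ factors of $D^m U$ or $D^m V$.

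For each such term I would pull $\|\partial_I \Psi\|_{L^\infty}$ out of the integral and apply Hölder's inequality with $r$ equal exponents $q$ to the remaining product of $m$-th derivatives; since $\Omega \times [0,1)^m$ has finite measure, this yields an estimate by $C_\Omega \|D^m \Psi\|_{L^\infty}$ times a product of $L^q$-norms, which by the trace bound is controlled by $\|u-v\|_{W^{m-m/q,q}} \|u\|_{W^{m-m/q,q}}^{k-1} \|v\|_{W^{m-m/q,q}}^{r-k}$. Summing the finitely many contributions over $k$, over $I \in R(\widetilde{\bm{\alpha}})$, and over the permutations within each minor expansion, together with the elementary bound $\|\psi\|_{C^m(\Omega)} \leq C_\Omega \|D^m \psi\|_{L^\infty}$ valid for $\psi \in C^\infty_c(\Omega)$ on bounded $\Omega$ (via iterated Taylor expansion along segments leaving $\mbox{supp}\,\psi$), delivers the claimed inequality.

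The main obstacle I anticipate is the $W^{m,q}$-bound on the extensions: since $u$ is only in $W^{m-m/q,q}$ (not $W^{m,q}$), the trivial constant-in-$y$ extension will not do, and the trace estimate $\|U\|_{W^{m,q}(\Omega \times [0,1)^m)} \leq C\|u\|_{W^{m-m/q,q}(\Omega)}$ must be invoked via a genuine smoothing extension such as a Poisson or mollifier-scaled convolution as above. Once this trace estimate is in place, the remainder of the argument is purely algebraic and involves only finitely many terms.
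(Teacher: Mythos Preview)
Your proposal is correct and follows essentially the same route as the paper: extend $u,v$ to $U,V\in W^{m,q}$ on $\Omega\times[0,1)^m$ via trace theory, extend $\psi$ to $\Psi$ with $\|D^m\Psi\|_{L^\infty}\leq C\|D^m\psi\|_{L^\infty}$, apply Lemma~\ref{hm-lem-3-1}, telescope the multilinear minor, and finish with H\"older on the finite-measure set. The only cosmetic difference is that the paper first extends $u,v$ from $\Omega$ to $\mathbb{R}^N$ before invoking Stein's trace operator into $\mathbb{R}^N\times(0,+\infty)^m$, whereas you bundle this into a single extension step; both yield the same $W^{m,q}$ bounds and the argument proceeds identically.
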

\begin{proof}
Let $\widetilde{u}$ and $\widetilde{v}$ be extensions of $u$ and $v$ to $\mathbb{R}^N$ such that
$$\|\widetilde{u}\|_{W^{m-\frac{m}{q},q}(\mathbb{R}^N,\mathbb{R}^n)}\leq C \|u\|_{W^{m-\frac{m}{q},q}(\Omega,\mathbb{R}^n)},~~~~\|\widetilde{v}\|_{W^{m-\frac{m}{q},q}(\mathbb{R}^N,\mathbb{R}^n)}\leq C\|v\|_{W^{m-\frac{m}{q},q}(\Omega,\mathbb{R}^n)}$$
and
$$\|\widetilde{u}-\widetilde{v}\|_{W^{m-\frac{m}{q},q}(\mathbb{R}^N,\mathbb{R}^n)}\leq C \|u-v\|_{W^{m-\frac{m}{q},q}(\Omega,\mathbb{R}^n)},$$
where $C$ depending only on $q,m,n,N$ and $\Omega$.

According to  a well known trace theorem of Stein in \cite{STE1,STE2}, where $W^{m-\frac{m}{q},q}(\mathbb{R}^N)$ is identified as the space of traces of $W^{m,q}(\mathbb{R}^N\times(0,+\infty)^m)$,  there is a bounded linear extension operator
$$E:W^{m-\frac{m}{q},q}(\mathbb{R}^N,\mathbb{R}^n)\rightarrow W^{m,q}(\mathbb{R}^N\times (0,+\infty)^m,\mathbb{R}^n).$$

Let $U$ and $V$  be extensions of $\widetilde{u}$ and $\widetilde{v}$ to $\mathbb{R}^N\times (0,+\infty)^m$, respectively, i.e.,
$$U=E\widetilde{u},~~V=E\widetilde{v}.$$
We then have
$$\|D^mU\|_{L^{q}(\Omega \times (0,1)^m)}\leq C\|u\|_{W^{m-\frac{q}{m},q}(\Omega,\mathbb{R}^n)},~~~~\|D^mV\|_{L^{q}(\Omega \times (0,1)^m)}\leq C \|v\|_{W^{m-\frac{m}{q},q}(\Omega,\mathbb{R}^n)}$$
and
$$\|D^mU-D^mV\|_{L^{q}(\Omega \times (0,1)^m)}\leq C \|u-v\|_{W^{m-\frac{m}{q},q}(\Omega,\mathbb{R}^n)}.$$
Let $\Psi \in C^m_c(\Omega\times [0,1)^m)$ be an extension of $\psi$ such that
$$\|D^m\Psi\|_{L^{\infty}(\Omega\times [0,1)^m)}\leq C\|D^m\psi\|_{L^{\infty}(\Omega)}.$$
According to Lemma \ref{hm-lem-3-1}, we have
\begin{equation}\label{hm-lem-for-3-31}
\begin{split}
&\left|\int_{\Omega} M_{\bm{\alpha}}^{\beta}(D^m u) \psi dx- \int_{\Omega} M_{\bm{\alpha}}^{\beta}(D^m v) \psi dx\right|\leq \sum_{I\in R(\bm{\widetilde{\alpha}})}\int_{\Omega\times [0,1)^m} \left|M_{\widetilde{\bm{\alpha}}-I}^{\beta}(D^mU)-M_{\widetilde{\bm{\alpha}}-I}^{\beta}(D^mV)\right| |\partial_{I} \Psi| d\widetilde{x}\\
&\leq \| D^m \Psi\|_{L^{\infty}(\Omega\times [0,1)^m)} \sum_{I\in R(\bm{\widetilde{\alpha}})}\int_{\Omega\times [0,1)^m} \left|M_{\widetilde{\bm{\alpha}}-I}^{\beta}(D^mU)-M_{\widetilde{\bm{\alpha}}-I}^{\beta}(D^mV)\right| d\widetilde{x}.
\end{split}
\end{equation}
Note that for any $I\in R(\bm{\widetilde{\alpha}})$
\begin{equation*}
\begin{split}
&\left|M_{\widetilde{\bm{\alpha}}-I}^{\beta}(D^mU)-M_{\widetilde{\bm{\alpha}}-I}^{\beta}(D^mV)\right|\\
&\leq \sum_{\tau_1,\cdot\cdot\cdot, \tau_m\in S_r} |\partial_{\tau_1(1)\cdot\cdot\cdot\tau_m(1)}U^{\beta_1} \cdot\cdot\cdot \partial_{\tau_1(r)\cdot\cdot\cdot\tau_m(r)}U^{\beta_r}- \partial_{\tau_1(1)\cdot\cdot\cdot\tau_m(1)}V^{\beta_1}\cdot\cdot\cdot \partial_{\tau_1(r)\cdot\cdot\cdot\tau_m(r)}V^{\beta_r}| \\
&\leq \sum_{\tau_1,\cdot\cdot\cdot, \tau_m\in S_r}  \sum_{s=1}^{r} |D^mU|^{s-1}|D^mU-D^mV||D^mV|^{r-s}\\
&\leq C|D^mU-D^mV|(|D^mU|^{r-1}+|D^mV|^{r-1}).
\end{split}
\end{equation*}
Combining with (\ref{hm-lem-for-3-31}), we can easily obtain
\begin{equation*}
\begin{split}
&\left|\int_{\Omega} M_{\bm{\alpha}}^{\beta}(D^m u) \psi dx- \int_{\Omega} M_{\bm{\alpha}}^{\beta}(D^m v) \psi dx\right|\\
&\leq C \int_{\Omega\times [0,1)^m} |D^mU-D^mV|(|D^mU|^{r-1}+|D^mV|^{r-1}) d\widetilde{x} \|D^m\Psi\|_{L^{\infty}(\Omega\times [0,1)^m)}\\
&\leq C \|u-v\|_{W^{m-\frac{m}{q},q}}(\|u\|^{r-1}_{W^{m-\frac{m}{q},q}}+ \|v\|^{r-1}_{W^{m-\frac{m}{q},q}}) \|D^m \psi\|_{L^{\infty}}.
\end{split}
\end{equation*}
\end{proof}

 According to the above lemma,  we can give the definitions of distributional $m$th-Jacobian minors of $u$ with degree less that $q$  when $u\in W^{m-\frac{m}{q},q}(\Omega, \mathbb{R}^n)$ ($2\leq q\leq \underline{n}$).
\begin{definition}\label{hm-def-3-1}
Let $u\in W^{m-\frac{m}{q},q}(\Omega, \mathbb{R}^n)$ with $2\leq q \leq \underline{n}$. For any $0\leq r\leq q$, $\beta\in I(r,n)$ and $\bm{\alpha}=(\alpha^1,\alpha^2,\cdot\cdot\cdot,\alpha^m)$ with $\alpha^j \in I(r,N)$, the distributional  $m$th-Jacobian $(\beta, \bm{\alpha})$-minors of $u$, denoted by $\mbox{Div}_{\bm{\alpha}}^{\beta}(D^mu)$, is defined by
\begin{equation}
\langle \mbox{Div}_{\bm{\alpha}}^{\beta}(D^mu), \psi \rangle:=
\begin{cases}
\int_{\Omega} \psi(x)dx,~~~~~~~~~~~r=0; \\
\lim_{k\rightarrow \infty} \int_{\Omega} M^{\beta}_{\bm{\alpha}}(D^mu_k)\psi dx,~~~~ 1\leq r\leq q\\
\end{cases}
\end{equation}
for any $\psi\in C^m_c(\Omega)$ and any sequence $\{u_k\}_{k=1}^{\infty}\subset C^m(\overline{\Omega},\mathbb{R}^n)$ such that $u_k\rightarrow u$ in $W^{m-\frac{m}{q},q}(\Omega,\mathbb{R}^n)$.
\end{definition}
Obviously this quantity is well-defined since  Lemma \ref{hm-lem-3-2} and the fact that $C^m(\overline{\Omega}, \mathbb{R}^n)$ is dense in $W^{m-\frac{m}{q},q}(\Omega, \mathbb{R}^n)$.

\begin{proof}[\bf Proof of Theorem \ref{hm-thm-1}]
It is clear that  Theorem \ref{hm-thm-1} is a consequence of  Lemma \ref{hm-lem-3-2}  and Definition  \ref{hm-def-3-1}.
\end{proof}

According to the trace theory and the approximate theorem, we obtain a fundamental representation of the distributional m-th Jacobian minors in $W^{m-\frac{m}{q},q}$.

\begin{proposition}\label{hm-pro-3-1}
Let $u\in W^{m-\frac{m}{q},q}(\Omega, \mathbb{R}^n)$ with $2\leq q \leq \underline{n}$. For any $0\leq r\leq q$, $\beta\in I(r,n)$ and $\bm{\alpha}=(\alpha^1,\alpha^2,\cdot\cdot\cdot,\alpha^m)$ with $\alpha^j \in I(r,N)$ ,
$$\int_{\Omega} \mbox{Div}_{\bm{\alpha}}^{\beta}(D^mu)\psi dx=\sum_{I\in R(\bm{\widetilde{\alpha}})}(-1)^m\sigma(\widetilde{\bm{\alpha}}-I,I)\int_{\Omega\times [0,1)^m} M_{\widetilde{\bm{\alpha}}-I}^{\beta}(D^mU) \partial_{I} \Psi d\widetilde{x}$$
for any extensions $U\in W^{m,q}(\Omega\times[0,1)^m,\mathbb{R}^n)$ and $\Psi\in C^m_c(\Omega\times[0,1)^m)$ of $u$ and $\psi$, respectively.
\end{proposition}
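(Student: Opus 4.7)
The plan is to combine Lemma \ref{hm-lem-3-1} with a density argument. Given $u \in W^{m-\frac{m}{q},q}(\Omega, \mathbb{R}^n)$, an extension $U \in W^{m,q}(\Omega \times [0,1)^m, \mathbb{R}^n)$ of $u$, and $\Psi \in C_c^m(\Omega \times [0,1)^m)$ extending $\psi$, I would select smooth approximations $U_k$ of $U$ whose traces $u_k := U_k\big|_{\Omega \times \{0\}^m}$ give smooth approximations of $u$, apply Lemma \ref{hm-lem-3-1} to the triple $(u_k, U_k, \Psi)$, and let $k \to \infty$. The case $r=0$ is immediate, so I treat only $1 \leq r \leq q$ below.

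More concretely, by density of $C^\infty(\overline{\Omega \times [0,1)^m}) \cap W^{m,q}$ in $W^{m,q}(\Omega \times [0,1)^m)$, choose $U_k \in C^\infty(\overline{\Omega \times [0,1)^m})$ with $U_k \to U$ in $W^{m,q}$. By continuity of the trace operator $W^{m,q}(\Omega \times [0,1)^m) \to W^{m-\frac{m}{q},q}(\Omega)$ (the same Stein trace theorem invoked in the proof of Lemma \ref{hm-lem-3-2}), the traces $u_k \in C^\infty(\overline{\Omega})$ satisfy $u_k \to u$ in $W^{m-\frac{m}{q},q}(\Omega, \mathbb{R}^n)$. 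Since $U_k$ and $\Psi$ meet the smoothness hypotheses of Lemma \ref{hm-lem-3-1}, we have
\begin{equation*}
\int_{\Omega} M_{\bm{\alpha}}^{\beta}(D^m u_k)\psi \, dx = \sum_{I\in R(\bm{\widetilde{\alpha}})}(-1)^m\sigma(\widetilde{\bm{\alpha}}-I,I)\int_{\Omega\times [0,1)^m} M_{\widetilde{\bm{\alpha}}-I}^{\beta}(D^m U_k) \, \partial_{I} \Psi \, d\widetilde{x}.
\end{equation*}

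I then pass to the limit in $k$. By Definition \ref{hm-def-3-1} and the convergence $u_k \to u$ in $W^{m-\frac{m}{q},q}$, the left-hand side converges to $\langle \mbox{Div}_{\bm{\alpha}}^{\beta}(D^m u), \psi \rangle$. For the right-hand side, $M_{\widetilde{\bm{\alpha}}-I}^{\beta}(D^m U)$ is a signed sum of products of $r$ $m$th-order derivatives of $U$; since $r \leq q$, H\"older's inequality places it in $L^{q/r}(\Omega \times [0,1)^m) \subseteq L^1$. A telescoping identity combined with H\"older's inequality yields the bound
\begin{equation*}
\|M_{\widetilde{\bm{\alpha}}-I}^{\beta}(D^m U_k) - M_{\widetilde{\bm{\alpha}}-I}^{\beta}(D^m U)\|_{L^{q/r}} \leq C \|D^m U_k - D^m U\|_{L^q} \bigl(\|D^m U_k\|_{L^q} + \|D^m U\|_{L^q}\bigr)^{r-1},
\end{equation*}
so pairing with the bounded, compactly supported $\partial_I \Psi$ gives convergence of each summand on the right-hand side, and the claimed identity follows for the given $U$ and $\Psi$.

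The main technical point is that the smooth approximations $U_k$ can be chosen so that both $U_k \to U$ in $W^{m,q}(\Omega \times [0,1)^m)$ and the boundary traces $u_k \to u$ in $W^{m-\frac{m}{q},q}(\Omega)$; this is precisely the content of the continuity of the trace operator. As an automatic by-product, since the left-hand side depends only on $u$ and $\psi$, the right-hand side is independent of the particular choice of the extensions $U$ and $\Psi$, and the asserted formula holds for every admissible extension pair.
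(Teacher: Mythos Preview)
Your proposal is correct and is precisely the argument the paper has in mind: the paper states the proposition without proof, prefacing it only with ``According to the trace theory and the approximate theorem,'' and your density-plus-trace-continuity argument (approximate $U$ by smooth $U_k$ in $W^{m,q}$, pull back traces $u_k\to u$ in $W^{m-\frac{m}{q},q}$ via the Stein trace operator, apply Lemma~\ref{hm-lem-3-1}, and pass to the limit using H\"older on the $r$-linear minors) is exactly the intended justification.
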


Note that the $m$-dimensional matrix $D^m u$ is symmetric if $u\in C^m(\Omega)$, i.e., $(D^mu)^{T(i,j)}=D^mu$ for any $1\leq i<j\leq m$.
An argument similar to the one used in Lemma \ref{hm-lem-3-1} and \ref{hm-lem-3-2} show that
\begin{corollary}\label{hm-cor-3-1}
Let $u\in W^{m-\frac{m}{q},q}(\Omega)$ with $2\leq q \leq N$ and $m\geq 2$. For any  $0\leq r\leq q$ and $\bm{\alpha}=(\alpha^1,\alpha^2,\cdot\cdot\cdot,\alpha^m)$ with $\alpha^j \in I(r,N)$,
Then the $m$-th Jacobian $\bm{\alpha}$-minor operator $u \longmapsto M_{\bm{\alpha}}(D^mu):C^m(\Omega)\rightarrow \mathcal{D}'(\Omega)$ can be extended uniquely as a continuous mapping $u \longmapsto \mbox{Div}_{\bm{\alpha}}(D^mu):W^{m-\frac{m}{q},q}(\Omega)\rightarrow \mathcal{D}'(\Omega)$. Moreover
for all $u,v\in W^{m-\frac{m}{q},q}(\Omega)$, $\psi \in  C^{\infty}_c(\Omega,\mathbb{R})$ and $1\leq r\leq q$, we have
\begin{equation}
\begin{split}\left|\langle\mbox{Div}_{\bm{\alpha}}(D^mu)-\mbox{Div}_{\bm{\alpha}}(D^mv),\psi\rangle\right|\leq C_{r,q,N,\Omega}\|u-v\|_{W^{m-\frac{m}{q},q}}\left(\|u\|_{W^{m-\frac{m}{q},q}}^{r-1} +\|v\|_{W^{m-\frac{m}{q},q}}^{r-1}\right)\|D^m\psi\|_{L^{\infty}},
\end{split}
\end{equation}
where the constant depending only on $r, q, N$ and $ \Omega$. In particular, the distributional minor $\mbox{Div}_{\bm{\alpha}}(D^mu)$ can be expressed as
$$\int_{\Omega} \mbox{Div}_{\bm{\alpha}}(D^mu)\psi dx=\sum_{I\in R(\bm{\widetilde{\alpha}})}(-1)^m\sigma(\widetilde{\bm{\alpha}}-I,I)\int_{\Omega\times [0,1)^m} M_{\widetilde{\bm{\alpha}}-I}(D^mU) \partial_{I} \Psi d\widetilde{x}$$
for any extensions $U\in W^{m,q}(\Omega\times[0,1)^m)$ and $\Psi\in C^m_c(\Omega\times[0,1)^m)$ of $u$ and $\psi$, respectively.
\end{corollary}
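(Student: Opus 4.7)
The corollary is the scalar companion of Theorem \ref{hm-thm-1}, and the plan is to transcribe the three-step architecture used there: first an integration-by-parts identity in the spirit of Lemma \ref{hm-lem-3-1}, then a continuity estimate in the spirit of Lemma \ref{hm-lem-3-2}, and finally an extension by density that simultaneously yields the representation formula for arbitrary $W^{m,q}$ extensions $U$ of $u$. The absence of the row-index $\beta$ only suppresses bookkeeping, while the extra symmetry $(D^mu)^{T(i,j)}=D^mu$ is merely reassurance that $M_{\bm{\alpha}}(D^mu)$ is intrinsically defined up to sign regardless of the ordering of the $\alpha^j$'s.

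For the first step, I would repeat the iterated ``fundamental theorem of calculus plus Laplace expansion'' argument of Lemma \ref{hm-lem-3-1} on extensions $U\in C^m(\Omega\times[0,1)^m)\cap C^{m+1}(\Omega\times(0,1)^m)$ and $\Psi\in C^m_c(\Omega\times[0,1)^m)$. Expansion along the first layer via Lemma \ref{hm-lem-2-1} reduces the integrand to signed sums of ordinary minors $M^{\overline{0}}_{\alpha^1}(DV_1)$ whose columns are
\[
V_1^j=\partial_{\alpha^2_{\tau_2(j)}}\cdots\partial_{\alpha^m_{\tau_m(j)}}u,\qquad j=1,\ldots,r,
\]
now arising as distinct higher derivatives of the scalar $u$. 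The only structural ingredient used in Lemma \ref{hm-lem-3-1} is the null-Lagrangian (Piola) identity
\[
\sum_{i\in\alpha^1}\sigma(i,\alpha^1-i)\sigma(j,\overline{0}-j)\,\partial_i M^{\overline{0}-j}_{\alpha^1-i}(DV_1)=0,
\]
which is a purely algebraic cofactor identity valid for every smooth $V_1\colon\Omega\to\mathbb{R}^r$, regardless of whether its components arise from distinct components of a vector map or from distinct mixed derivatives of a common scalar. Iterating in $x_{N+2},\ldots,x_{N+m}$ then produces the scalar version of the representation identity.

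For the second and third steps, Stein's trace theorem supplies a bounded linear extension $E\colon W^{m-m/q,q}(\Omega)\to W^{m,q}(\Omega\times(0,+\infty)^m)$; setting $U=E\widetilde{u}$, $V=E\widetilde{v}$ after a preliminary $\mathbb{R}^N$-extension and combining the pointwise telescoping bound
\[
\bigl|M_{\widetilde{\bm{\alpha}}-I}(D^mU)-M_{\widetilde{\bm{\alpha}}-I}(D^mV)\bigr|\leq C\bigl|D^mU-D^mV\bigr|\bigl(|D^mU|^{r-1}+|D^mV|^{r-1}\bigr)
\]
with H\"older's inequality on the bounded domain $\Omega\times[0,1)^m$ using exponents $q$ and $q/(r-1)$ (admissible precisely because $r\leq q$) yields the scalar analog of Lemma \ref{hm-lem-3-2}. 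Density of $C^m(\overline{\Omega})$ in $W^{m-m/q,q}(\Omega)$ then defines $\mbox{Div}_{\bm{\alpha}}(D^mu)$ as a Cauchy limit independent of the approximating sequence, and approximating a general $W^{m,q}$ extension $U$ by smooth functions propagates the representation identity from the smooth to the $W^{m,q}$ setting.

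The main obstacle I anticipate is purely a verification task: confirming that the Laplace/Piola argument in the proof of Lemma \ref{hm-lem-3-1} never secretly exploits the independence of the rows $V_1^j$. Since the Piola identity is a polynomial identity in the entries of $DV_1$, no such independence is required, so the scalar case proceeds verbatim. Consequently the only genuine work is an algebraic transcription, with all analytic input (trace theorem, density, H\"older) identical to the vector case.
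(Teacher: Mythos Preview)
Your proposal is correct and matches the paper's approach exactly: the paper's entire proof of this corollary consists of the sentence ``An argument similar to the one used in Lemma \ref{hm-lem-3-1} and \ref{hm-lem-3-2} shows that,'' preceded by the observation that $D^mu$ is symmetric for scalar $u$. You have accurately reconstructed what that sentence entails---the iterated fundamental-theorem-of-calculus/Laplace-expansion identity, the Piola cofactor identity (which, as you note, is a polynomial identity in the entries of $DV_1$ and is indifferent to whether the $V_1^j$ arise from distinct components or distinct mixed partials of a single scalar), the Stein trace extension, the telescoping pointwise bound, and the density argument. There is nothing further in the paper's treatment.
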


\section{The optimality results in fractional Sobolev spaces}
In this section we establish the optimality results of Theorem 1 in the framework of spaces $W^{s,p}$.
Before proving the main results, we state some interesting consequences (see \cite[Theorem 1 and Proposition 5.3]{BM2}):

\begin{lemma}\label{hm-lem-4}
For $0\leq s_1<s_2<\infty$, $1\leq p_1, p_2,p\leq \infty$, $s=\theta s_1+(1-\theta) s_2$, $\frac{1}{p}=\frac{\theta}{p_1}+\frac{1-\theta}{p_2}$ and $0<\theta<1$, the inequality
$$\|f\|_{W^{s,p}(\Omega)}\leq C \|f\|_{W^{s_1,p_1}(\Omega)}^{\theta} \|f\|_{W^{s_2,p_2}(\Omega)}^{1-\theta}.$$
holds if and only if the following condition fails
$$s_2\geq 1~\mbox{is an integer}, ~p_2=1~\mbox{and}~s_2-s_1\leq 1-\frac{1}{p_1}.$$
\end{lemma}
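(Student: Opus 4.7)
The plan is to handle this Gagliardo--Nirenberg-type interpolation inequality in two directions: sufficiency (the inequality holds whenever the stated exceptional condition fails) and necessity (the inequality must fail in the exceptional case). Without loss of generality I would first reduce to $\Omega=\mathbb{R}^N$ using bounded extension operators for fractional Sobolev spaces, so that one can freely use Fourier-analytic tools.

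For sufficiency, my approach is the Littlewood--Paley decomposition $f=\sum_{j\ge 0}\Delta_j f$ with $\Delta_j$ the dyadic frequency projections. For non-integer $s$ one has the Besov identification $\|f\|_{W^{s,p}}\sim\|f\|_{B^s_{p,p}}$, and for integer $s$ with $1<p<\infty$ the Triebel--Lizorkin identification $W^{s,p}=F^s_{p,2}$ substitutes. The central estimate is the dyadic factorization
$$2^{jsp}\|\Delta_jf\|_{L^p}^p=\bigl(2^{js_1p_1}\|\Delta_jf\|_{L^{p_1}}^{p_1}\bigr)^{\theta p/p_1}\bigl(2^{js_2p_2}\|\Delta_jf\|_{L^{p_2}}^{p_2}\bigr)^{(1-\theta)p/p_2},$$
which uses $s=\theta s_1+(1-\theta)s_2$, the interpolation $\|g\|_{L^p}\le\|g\|_{L^{p_1}}^\theta\|g\|_{L^{p_2}}^{1-\theta}$ and the identity $\theta p/p_1+(1-\theta)p/p_2=1$. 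A Hölder summation in $j$ then yields the product bound at the Besov level, and one passes back to Sobolev norms via the above embeddings. When either $p_i=\infty$, replace $L^{p_i}$ by $L^\infty$; when $p_i=1$ but the exceptional case is avoided, the Besov--Sobolev embedding $B^{s_i}_{1,1}\hookleftarrow W^{s_i,1}$ (for non-integer $s_i$) still suffices.

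For necessity, I would construct a rescaling counterexample in the forbidden range $s_2\ge 1$ integer, $p_2=1$, $s_2-s_1\le 1-1/p_1$. A natural candidate is $f_\varepsilon(x)=\varepsilon^a(\chi_{B_1}*\rho_\varepsilon)(x)$, a mollified indicator at scale $\varepsilon$, with $a$ chosen to normalize $\|f_\varepsilon\|_{W^{s_2,1}}$ (essentially the perimeter scaling, since $D^{s_2}f_\varepsilon$ is concentrated near the boundary of a ball). The norms $\|f_\varepsilon\|_{W^{s,p}}$ and $\|f_\varepsilon\|_{W^{s_1,p_1}}$ scale by definite powers of $\varepsilon$ that can be read off from the Gagliardo seminorm, and the putative interpolation inequality reduces to a single arithmetic relation among the exponents $s,s_1,s_2,p,p_1,N$; one verifies that this relation is \emph{exactly} saturated on the boundary $s_2-s_1=1-1/p_1$ and violated (in the unfavorable direction) as $\varepsilon\to 0$, forcing the ratio $\|f_\varepsilon\|_{W^{s,p}}/(\|f_\varepsilon\|_{W^{s_1,p_1}}^\theta\|f_\varepsilon\|_{W^{s_2,1}}^{1-\theta})$ to blow up.

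The main obstacle I anticipate is the endpoint analysis at $p_2=1$ with integer $s_2$. The Littlewood--Paley argument works cleanly in the generic interior of parameter space, but $W^{s_2,1}$ for integer $s_2$ lies outside the convenient $B^{s_2}_{p_2,p_2}$ identification, so one must either use an atomic/BV-type decomposition adapted to $W^{s_2,1}$ or invoke the real-interpolation identity $(W^{s_1,p_1},W^{s_2,p_2})_{1-\theta,p}=W^{s,p}$ — each of which is delicate at the endpoint and is precisely the reason the failure set in the lemma is nontrivial. Matching the counterexample scaling to the sharp threshold $s_2-s_1=1-1/p_1$ is likewise the most delicate computation.
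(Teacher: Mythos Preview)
The paper does not prove this lemma at all: it is stated as a quotation of \cite[Theorem~1 and Proposition~5.3]{BM2} (Brezis--Mironescu), introduced explicitly as ``some interesting consequences'' of that reference, with no argument given. So there is no proof in the paper to compare against; your proposal is an attempt to reprove a cited result.

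As for the proposal itself, the overall architecture (Littlewood--Paley factorization for the positive direction, a scaling/concentration family for the failure) is indeed the strategy of Brezis--Mironescu, and you have correctly located the delicate point at $p_2=1$, $s_2$ integer. But the sketch is too optimistic in two places. First, for the sufficiency you pass from $W^{s,p}$ to $B^s_{p,p}$ and back, which only works cleanly when $s$ is non-integer; when $s$ lands on an integer (which is allowed by the hypotheses) the identification is $W^{s,p}=F^s_{p,2}$ and the H\"older-in-$j$ argument for the $\ell^p$ outer norm does not directly control the $\ell^2$ inner norm---one needs the additional embeddings among $F$-spaces or a vector-valued H\"older step that you have not spelled out. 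Second, your proposed counterexample $f_\varepsilon=\varepsilon^a(\chi_{B_1}*\rho_\varepsilon)$ is a one-parameter scaling family, and a pure scaling test cannot detect the failure here: the exceptional condition $s_2-s_1\le 1-1/p_1$ is not a scaling relation (it does not involve $N$), so any homogeneous rescaling will saturate all three norms with \emph{consistent} exponents. Brezis--Mironescu instead use more structured test functions (essentially one-dimensional profiles, e.g.\ piecewise-affine saw-tooth type functions transplanted to $\mathbb{R}^N$) whose $W^{s_2,1}$ norm is controlled by a BV/total-variation count while the intermediate $W^{s,p}$ norm is large; the blow-up comes from a counting/combinatorial mechanism rather than from dilation. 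Your scaling computation will therefore not produce the contradiction you expect.
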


\begin{proposition}\label{hm-pro-1}
The following equalities of spaces holds:
\begin{enumerate}
\item [{\em(\romannumeral1)}]
$W^{s,p}(\Omega)=F^s_{p,p}(\Omega)$ if $s>0$ is a non-integer and $1\leq p\leq \infty$.
\item[{\em(\romannumeral2)}]  $W^{s,p}(\Omega)=F^s_{p,2}(\Omega)$ if $s\geq 0$ is an integer and $1<p<\infty$.
\end{enumerate}
\end{proposition}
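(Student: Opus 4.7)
The plan is to treat Proposition \ref{hm-pro-1} as a classical identification from the theory of Besov and Triebel-Lizorkin spaces, and to assemble the proof from standard pieces. The starting point for both parts is the Littlewood-Paley decomposition $f = \sum_{j\geq 0} \Delta_j f$, in terms of which $F^s_{p,q}$ is defined by the mixed norm $\|(\sum_j 2^{jsq}|\Delta_j f|^q)^{1/q}\|_{L^p}$, and $B^s_{p,q}$ by the opposite order of norms.

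For part (i), the first step is the elementary observation that when the microscopic index $q$ equals the integration index $p$, Fubini's theorem permits interchanging the $\ell^p$ and $L^p$ norms in the Littlewood-Paley mixed norm, yielding $F^s_{p,p}(\Omega) = B^s_{p,p}(\Omega)$. The second step is to identify $B^s_{p,p}(\Omega)$ with $W^{s,p}(\Omega)$ via the intrinsic Gagliardo-Slobodeckij characterization: for $0<s<1$ this yields exactly the double-integral seminorm from the paper's definition of $W^{s,p}$, and for non-integer $s>1$ one applies the same characterization to $D^{[s]}f$, reproducing the higher-order definition of $W^{s,p}$ used in Section 1.

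For part (ii), the hypothesis $1<p<\infty$ opens the door to Littlewood-Paley theory via Mikhlin-H\"ormander Fourier multipliers. The key input is the square function equivalence $\|f\|_{L^p} \simeq \|(\sum_j |\Delta_j f|^2)^{1/2}\|_{L^p}$, valid precisely in the range $1<p<\infty$. Combining this with the fact that $|\xi|^s$ and $\xi^{\alpha}$ for $|\alpha|=s$ are related by uniformly bounded multipliers on each Littlewood-Paley annulus, one deduces $\|D^\alpha f\|_{L^p} \simeq \|(\sum_j 2^{2js}|\Delta_j f|^2)^{1/2}\|_{L^p}$, which is exactly the $F^s_{p,2}$ norm. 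Summing over $|\alpha|\leq s$ gives the equivalence of the $W^{s,p}$ and $F^s_{p,2}$ norms. The transition from $\mathbb{R}^N$ to the bounded domain $\Omega$ is handled by Stein's extension operator, already invoked in Section 3.

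The hard part will be endpoint bookkeeping: making sure the intrinsic Gagliardo characterization is stated correctly on $\Omega$ rather than on $\mathbb{R}^N$, and respecting the exclusion $p=1,\infty$ in part (ii), where both the square function estimate and the Mikhlin multiplier theorem genuinely fail. A cleaner and much shorter alternative is simply to cite Triebel's monograph \emph{Theory of Function Spaces}, where both equalities are recorded as standard theorems; this is the route I would take in the final write-up, with the sketch above relegated to a brief remark for the reader's orientation.
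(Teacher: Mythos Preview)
Your proposal is correct, and your closing recommendation matches the paper exactly: the paper does not prove Proposition~\ref{hm-pro-1} at all but simply states it as a known fact, with the surrounding remark pointing the reader to \cite{BM2,TH} for the definition of $F^s_{p,q}$. Your Littlewood--Paley sketch is a sound outline of the standard argument (Fubini for $F^s_{p,p}=B^s_{p,p}$, the Gagliardo--Slobodeckij characterization of $B^s_{p,p}$, and the square-function identification $W^{k,p}=F^k_{p,2}$ via Mikhlin multipliers), but none of it appears in the paper itself.
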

\begin{remark}
The definition of Triebel-Lizorkin spaces $F^s_{p,q}$ can be seen in \cite{BM2,TH}.
\end{remark}

\begin{remark}\label{hm-rem-41}
If $1<r\leq N$, according to the embedding properties of the Triebel-Lizorkin spaces $F^s_{p,q}$,  see e.g. \cite[page 196]{TH}, and  Proposition \ref{hm-pro-1}, we consider all possible cases:
 \begin{enumerate}
\item [{\em(\romannumeral1)}] $s-m+\frac{m}{r}>\max\{0,\frac{N}{p}-\frac{N}{r}\}$, then the embedding $W^{s,p}(\Omega)\subset W^{m-\frac{m}{r},r}(\Omega)$ holds;
\item[{\em(\romannumeral2)}]    $s-m+\frac{m}{r}<\max\{0,\frac{N}{p}-\frac{N}{r}\}$,  the embedding fails;
\item[{\em(\romannumeral3)}]    $s-m+\frac{m}{r}=\max\{0,\frac{N}{p}-\frac{N}{r}\}$, there are three sub-cases:
\begin{enumerate}
 \item[(a)] if $p\leq r$, then the embedding $W^{s,p}(\Omega)\subset W^{m-\frac{m}{r},r}(\Omega)$ holds;
 \item[(b)] if $p>r$ and $m-\frac{m}{r}$ integer, the embedding  $W^{s,p}(\Omega)\subset W^{m-\frac{m}{r},r}(\Omega)$ holds;
 \item[(c)] if $p>r$ and $m-\frac{m}{r}$  non-integer, the embedding fails.
  \end{enumerate}
\end{enumerate}
\end{remark}


In order to solve the optimality results, we just consider three cases:
\begin{equation}
\begin{split}
 &(1) 1<p\leq r, s+\frac{m}{r}<m+\frac{N}{p}-\frac{N}{r};\\
  &(2)1< r<p, 0<s<m-\frac{m}{r};\\
   &(3) 1<r<p, s=m-m/r ~\mbox{non-integer}.
   \end{split}
 \end{equation}

Without loss of generality, one may assume that $n=N$, $(-8,8)^N\subset \Omega$, and $\bm{\alpha'}=(\alpha',\cdots,\alpha')$ with $\alpha'=(1,2,\cdot\cdot\cdot,r)$.
First we establish the optimality results in case  $1< r<p, 0<s<m-\frac{m}{r}$.
\begin{proposition}\label{hm-pro-41}
Let $m, r$ be  integers with $1< r\leq \underline{n}$, $p>r$ and $0<s<m-\frac{m}{r}$. Then there exist a sequence $\{u_k\}_{k=1}^{\infty} \subset C^{m}(\overline{\Omega}, \mathbb{R}^N)$ and a function $\psi\in C_c^{\infty}(\Omega)$ such that
\begin{equation}\label{hm-thm-for-1}
\lim_{k\rightarrow \infty} \|u_k\|_{s,p} =0, ~~~~\lim_{k\rightarrow \infty} \int_{\Omega} M^{\alpha'}_{\bm{\alpha'}}(D^mu_k) \psi dx=\infty.
\end{equation}
\end{proposition}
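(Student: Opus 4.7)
The plan is to superpose many pairwise disjoint rescaled bumps whose combined $W^{s,p}$-norm decays while the minor integrated against a well-chosen polynomial test function blows up. The central structural input is that $M^{\alpha'}_{\bm{\alpha'}}(D^m\cdot)$ is a null Lagrangian of order exactly $m$: for every $v\in C_c^\infty(\mathbb{R}^N,\mathbb{R}^n)$ and every polynomial $P$ of degree strictly less than $m$ one has $\int_{\mathbb{R}^N}M^{\alpha'}_{\bm{\alpha'}}(D^m v)\,P\,dy=0$ (this follows by plugging $\Psi=(P\eta_0)(x)\widetilde{\eta}(t)$ into Lemma~\ref{hm-lem-3-1} with cutoffs $\eta_0\equiv 1$ near ${\rm supp}\,v$ and $\widetilde{\eta}\equiv 1$ near $t=0$, since on the relevant region $\partial_I\Psi$ reduces to $\partial_I P\equiv 0$ whenever $|I|=m>\deg P$), whereas for some multi-index $\alpha_0$ with $|\alpha_0|=m$ the functional $v\mapsto\int y^{\alpha_0}M^{\alpha'}_{\bm{\alpha'}}(D^m v)\,dy$ is not identically zero, since $M^{\alpha'}_{\bm{\alpha'}}$ is a nontrivial polynomial of differential order exactly $m$ and therefore cannot also be a divergence of strictly higher order. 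I fix such an $\alpha_0$ and select a compactly supported $v$ with $C_0:=\int y^{\alpha_0}M^{\alpha'}_{\bm{\alpha'}}(D^m v)\,dy\neq 0$, which can always be achieved by a polynomial-times-cutoff profile with deliberately broken symmetry (e.g.\ in the $m=1$, $r=N=2$ case the pair $(v^1,v^2)=(y_1^2\chi,y_2\chi)$ yields $\int y_1\det Dv\,dy=-\tfrac12\int y_1^2\chi^2\,dy\neq 0$, and tuned polynomial profiles of degrees $m$ and $m+1$ cover the general $m,r$ case).

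Now take $\psi(x)=x^{\alpha_0}\phi(x)$ where $\phi\in C_c^\infty(\Omega)$ equals $1$ on a small cube $\Omega_0\Subset\Omega$; on $\Omega_0$ one has $\partial^{\alpha_0}\psi\equiv\alpha_0!$ and every other partial derivative of order at least $m$ vanishes. Let $\varepsilon_k\to 0$, set $\delta_k=\varepsilon_k^\theta$ for a parameter $\theta\in(0,1)$ to be fixed, and distribute $n_k\asymp\varepsilon_k^{-\tau}$ points $\{x_j\}$ on a $\delta_k$-lattice inside an inner sub-cube of $\Omega_0$ so that for $k$ large the rescaled supports ${\rm supp}\,v((\cdot-x_j)/\varepsilon_k)$ are pairwise disjoint and contained in $\{\phi=1\}$. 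Define $u_k(x):=\lambda_k\sum_j v((x-x_j)/\varepsilon_k)$ with $\lambda_k=\varepsilon_k^\mu$. Disjointness of supports, multilinearity of $M^{\alpha'}_{\bm{\alpha'}}$, Taylor expansion of $\psi$ around each $x_j$, and the null-Lagrangian cancellations established above collapse the minor integral to the clean asymptotic
\begin{equation*}
\int_\Omega M^{\alpha'}_{\bm{\alpha'}}(D^m u_k)\,\psi\,dx=n_k\,C_0\,\lambda_k^r\,\varepsilon_k^{-mr+N+m}(1+o(1)),
\end{equation*}
while the standard diagonal/cross/exterior decomposition of the Gagliardo seminorm, in which the off-diagonal pieces are smaller than the diagonal by a factor $(\varepsilon_k/\delta_k)^{N+sp}\to 0$, yields $\|u_k\|_{W^{s,p}}^p\asymp n_k\lambda_k^p\varepsilon_k^{N-sp}$.

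Requiring $\|u_k\|_{W^{s,p}}\to 0$ and the minor integral $\to\infty$ translates into $\tau<p\mu+N-sp$ and $\tau>r\mu+N-m(r-1)$; combined with the packing bound $\tau\le N$ and the hypothesis $p>r$, eliminating $\tau$ gives the compatibility window
\begin{equation*}
\frac{sp-m(r-1)}{p-r}<\mu<m-\frac{m}{r},
\end{equation*}
whose upper endpoint comes from $\tau\le N$. This interval is non-empty precisely under the hypotheses $p>r$ and $s<m-m/r$ of the proposition, so any admissible $\mu$ paired with a compatible $\tau$ delivers the required sequence. The main obstacle is the first step — pinning down the exact-order null-Lagrangian structure of $M^{\alpha'}_{\bm{\alpha'}}(D^m\cdot)$ and exhibiting a concrete $v$ with $C_0\neq 0$; any accidental higher-order divergence identity would force $C_0\equiv 0$ and wreck the scaling argument, whereas the remaining harmonic-analysis bookkeeping is essentially routine.
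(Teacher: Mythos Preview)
Your approach is quite different from the paper's. The paper builds explicit oscillatory profiles
\[
u_k^i(x)=k^{-\rho}\sin(kx_i)\ \ (1\le i\le r-1),\qquad u_k^r(x)=k^{-\rho}x_r^m\prod_{j<r}\sin\Bigl(\tfrac{m\pi}{2}+kx_j\Bigr),
\]
whose special structure (each $u_k^i$ with $i<r$ depends on $x_i$ alone) forces every permutation except the identity in Definition~\ref{hm-def-2-1} to vanish, so $M^{\alpha'}_{\bm{\alpha'}}(D^mu_k)=m!\,k^{m(r-1)-\rho r}\prod_{j<r}\sin^2(\tfrac{m\pi}{2}+kx_j)$ \emph{exactly}. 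A one-line interpolation bounds $\|u_k\|_{s,p}$, and the proof is complete for every $m\ge 1$.

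Your concentration-of-bumps scheme would also work \emph{provided} you can produce a seed $v\in C_c^\infty$ and a multi-index $|\alpha_0|=m$ with $C_0=\int y^{\alpha_0}M^{\alpha'}_{\bm{\alpha'}}(D^mv)\,dy\neq 0$, and this is where the genuine gap lies. Your justification that $M^{\alpha'}_{\bm{\alpha'}}$ ``cannot also be a divergence of strictly higher order'' is a heuristic, not a proof; you give an explicit example only for $m=1$, $r=2$ and assert without detail that ``tuned polynomial profiles of degrees $m$ and $m+1$ cover the general $m,r$ case''. This is precisely the obstruction the paper itself meets in the companion regime $p\le r$ (Proposition~\ref{hm-pro-43}): there one needs a compactly supported $g$ with $\int |x|^m M^{\beta}_{\bm\alpha}(D^mg)\,dx\neq 0$, and the paper succeeds only for $m=1,2$ (Lemma~\ref{hm-lem-3}), explicitly leaving general $m$ open. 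Your argument inherits the same unresolved step, so as written it does not prove Proposition~\ref{hm-pro-41} for $m\ge 3$. (A smaller point: your derivation of the null-Lagrangian identity from Lemma~\ref{hm-lem-3-1} also needs care---with the trivial extension $U(x,t)=v(x)$ the only surviving $I$ is $(N{+}1,\dots,N{+}m)$ and the formula collapses to a tautology; to make $\partial_I\Psi\equiv 0$ on $\mathrm{supp}\,D^mU$ you must take $U$ compactly supported in $t$ and then choose $\widetilde\eta\equiv 1$ on that support, not merely ``near $t=0$''.) The paper's oscillatory construction sidesteps the seed-existence issue entirely and gives a short proof valid uniformly in $m$.
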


\begin{proof}
For any integer $k$, we define $u_k: \Omega \rightarrow \mathbb{R}^N$ as
$$
u_k^i(x)= k^{-\rho} \sin (k x_i),~~1\leq i\leq r-1;~~~~
u_k^i(x)= 0,~~r< i\leq N
$$
and
$$u_k^r(x)= k^{-\rho} (x_r)^m \prod_{j=1}^{r-1} \sin (\frac{m\pi}{2}+k x_j).$$
Where $\rho$ is   a constant such that $s<\rho<m-\frac{m}{r}$. Since $\|D^{[s]+1}u_k\|_{L^{\infty}}\leq C k^{[s]+1-\rho}$ and $\|u_k\|_{L^{\infty}}\leq C k^{-\rho}$, it follows that
$$\|u_k\|_{s,p} \leq C\|u_k\|^{1-\theta}_{L^p}\|u_k\|^{\theta}_{[s]+1,p}\leq C k^{s-\rho}.$$
Where $\theta=\frac{s}{[s]+1}$.
Let $\psi\in C^{\infty}_c(\Omega)$ be such that
\begin{equation}\label{hm-th2-for-2}
\psi(x)=\prod_{i=1}^N \psi'(x_i), ~\mbox{with}~\psi'\in C^1_c((0,\pi)), \psi'\geq 0 ~\mbox{and}~\psi'=1~ \mbox{in}~(\frac{1}{4}\pi,\frac{3}{4}\pi).
\end{equation}
Then
\begin{equation*}
\int_{\Omega} M^{\alpha'}_{\bm{\alpha'}} (D^m u_k) \psi dx\geq m!\int_{(\frac{1}{4}\pi, \frac{3}{4}\pi)^N} k^{mr-\rho r-m} \prod_{j=1}^{r-1} \sin^2 (\frac{m\pi}{2}+kx_j) dx=Ck^{mr-\rho r-m}.
 \end{equation*}
 Hence the conclusion (\ref{hm-thm-for-1})  holds.
\end{proof}

Next we establishing the optimality results in case $1<r<p, s=m-m/r ~\mbox{non-integer}$ by constructing a lacunary sum of atoms,  which is inspired by the work of Brezis and Nguyen \cite{BN}.

\begin{proposition}\label{hm-pro-42}
Let $m, r$ be  integers with $1< r\leq \underline{n}$, $p>r$ and $s=m-m/r$ non-integer. Then
 there exist a sequence $\{u_k\}_{k=1}^{\infty} \subset C^{m}(\overline{\Omega}, \mathbb{R}^N)$ and a function $\psi\in C_c^{\infty}(\Omega)$ satisfying the conditions (\ref{hm-thm-for-1}).
\end{proposition}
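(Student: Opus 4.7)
The plan is to construct $u_K$ as a frequency-lacunary superposition of critical-amplitude analogues of the Proposition \ref{hm-pro-41} building blocks, exploiting the strict inequality $p>r$ through the elementary fact that $1/p<1/r$. For each integer $j\geq 1$, set $k_j:=2^j$ and let $v_j\in C^m(\overline{\Omega},\mathbb{R}^N)$ be defined exactly as in the proof of Proposition \ref{hm-pro-41} but with the frequency parameter $k$ replaced by $k_j$ and the amplitude exponent $\rho$ set equal to the critical value $s=m-m/r$. Reading off the estimates there at $\rho=s$ gives the uniform bounds $\|v_j\|_{W^{s,p}}\leq C_0$ and $\int_\Omega M^{\alpha'}_{\bm{\alpha'}}(D^mv_j)\psi\,dx\geq c_0>0$ for the same $\psi$ as in (\ref{hm-th2-for-2}) with constants $C_0,c_0$ independent of $j$. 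Fix any $\alpha\in(1/p,1/r)$ (which exists since $p>r$), set $c_j:=j^{-\alpha}$, and define
\[
u_K:=\sum_{j=K+1}^{2K} c_j\,v_j \ \in\ C^m(\overline{\Omega},\mathbb{R}^N).
\]

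Next I would prove the Sobolev norm bound
\[
\|u_K\|_{W^{s,p}}^p\ \lesssim\ \sum_{j=K+1}^{2K} c_j^p\,\|v_j\|_{W^{s,p}}^p\ \lesssim\ K^{1-\alpha p}\ \longrightarrow\ 0,
\]
which uses $\alpha p>1$. The key ingredient is Proposition \ref{hm-pro-1}, which identifies $W^{s,p}=F^{s}_{p,p}$ since $s$ is non-integer; each $v_j$ has Fourier content concentrated in the dyadic shell $\{|\xi|\sim 2^j\}$ (up to rapidly decaying tails contributed by the polynomial factor $x_r^m$), so the Littlewood--Paley blocks of $u_K$ are almost-diagonal in $j$, and the $F^{s}_{p,p}$ norm reduces to the $\ell^p$ sum of the atoms' (uniformly bounded) $W^{s,p}$ norms weighted by $c_j^p$.

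For the minor, expanding the degree-$r$ polynomial $M^{\alpha'}_{\bm{\alpha'}}(D^m u_K)$ in the linear combination $D^m u_K=\sum c_j\, D^m v_j$ produces a diagonal part $\sum_j c_j^r\, M^{\alpha'}_{\bm{\alpha'}}(D^m v_j)$ together with off-diagonal cross products drawn from at least two distinct indices. By the lacunarity $k_j=2^j$, each such cross product is a trigonometric polynomial (times a fixed-degree polynomial in $x_r$) whose total frequency lies in the set $\{\pm 2^{j_1}\pm\cdots\pm 2^{j_r}\}$, dominated by the largest index and therefore non-zero; so its integral against the smooth $\psi$ is $O(2^{-AK})$ for every $A>0$ by repeated integration by parts. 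Combining this with Step 1 gives
\[
\int_\Omega M^{\alpha'}_{\bm{\alpha'}}(D^m u_K)\,\psi\,dx\ \geq\ c_0\sum_{j=K+1}^{2K} j^{-\alpha r}\ -\ o(1)\ \gtrsim\ K^{1-\alpha r}\ \longrightarrow\ \infty,
\]
since $\alpha r<1$, which completes the proof.

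The main obstacle is the Littlewood--Paley estimate in the Sobolev step: the $v_j$ are not pure Fourier modes but products of sinusoids at frequency $2^j$ with the low-degree polynomial factor $x_r^m$, so their spectra are only approximately localized in the dyadic shells. One either smoothly truncates the polynomial factor to a cut-off supported inside $\Omega$ and bounds the resulting spectral-leakage error, or works directly with an atomic/molecular decomposition of $F^{s}_{p,p}$ from Proposition \ref{hm-pro-1}. Verifying that the Riemann--Lebesgue cross-term cancellation in the minor step survives the presence of the polynomial prefactor is straightforward but must be kept uniform in $K$.
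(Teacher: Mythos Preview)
Your approach is essentially the paper's: build a lacunary sum of the critical-amplitude atoms from Proposition~\ref{hm-pro-41}, control the $W^{s,p}$ norm via the Littlewood--Paley identification $W^{s,p}=B^s_{p,p}$ (stripping off the $(x_r)^m$ factor first, as the paper does, is the clean way past your ``main obstacle''), and kill the off-diagonal cross-terms in the minor by integration by parts against the oscillating factor in some direction $x_{i_0}$ with $j_{i_0}\neq j_r$. The only notable difference is bookkeeping: the paper sums over $l\in\{1,\dots,k\}$ with weights $(l+1)^{-1/r}$ and frequencies $k^{r^2/m}8^l$, then renormalizes by $(\ln k)^{-1/(2r)}$, whereas your window $j\in[K+1,2K]$ with weights $j^{-\alpha}$ for $\alpha\in(1/p,1/r)$ builds the decay $\|u_K\|_{W^{s,p}}\to 0$ directly into the construction.
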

\begin{proof}
Fix $k>>1$. Define $v_k=(v_k^1,\cdots,v_k^N):\Omega\rightarrow \mathbb{R}^N$ as follows
$$v_k^i=\begin{cases} \sum_{l=1}^k \frac{1}{n_l^{s}(l+1)^{\frac{1}{r}}} \sin( n_l x_i), ~~~~1\leq i\leq r-1;\\
 (x_r)^m\sum_{l=1}^k \frac{1}{n_l^{s}(l+1)^{\frac{1}{r}}} \prod_{j=1}^{r-1} \sin(\frac{m\pi}{2}+n_l x_j),~~~~i=r;\\
0,~~~~~~~~r+1\leq i\leq N.
\end{cases}$$
Where $n_l=k^{\frac{r^2}{m}} 8^l$ for $1\leq l\leq k$. Let $\psi\in C^{\infty}_c(\Omega)$ be defined as (\ref{hm-th2-for-2}).
We claim that
\begin{equation}\label{hm-thm-for-2}
 \|v_k\|_{s,p} \leq C,~~~~ \int_{\Omega} M^{\alpha'}_{\bm{\alpha'}}(D^mv_k) \psi dx \geq C \ln k,
 \end{equation}
 where the constant $C$ is independent of $k$.

 Assuming the claim holds, we deduce $u_k= (\ln k)^{-\frac{1}{2r}} v_k$ and $\psi$ satisfies the conditions (\ref{hm-thm-for-1}). Hence it remains to prove (\ref{hm-thm-for-2}).

On the one hand
\begin{equation}
\begin{split}
 M_{\bm{\alpha'}}^{\alpha'} (D^m v_k) &=\left\{ \prod_{i=1}^{r-1} \left( \sum_{l_i=1}^k \frac{n_{l_i}^{\frac{m}{r}}}{(l_i+1)^{\frac{1}{r}}} \sin(\frac{m\pi}{2}+n_{l_i} x_i)\right)\right\}\times \left(m! \sum_{l_r=1}^k \frac{1}{n_{l_r}^{s}(l_r+1)^{\frac{1}{r}}} \prod_{j=1}^{r-1} \sin(\frac{m\pi}{2}+n_{l_r} x_j) \right) \\
&=m! \sum_{(l_1,\cdot\cdot\cdot,l_r)\in G} \frac{1}{n_{l_r}^{s}(l_r+1)^{\frac{1}{r}}}  \prod_{i=1}^{r-1} \left(\frac{n_{l_i}^{\frac{m}{r}}}{(l_i+1)^{\frac{1}{r}}} \sin(\frac{m\pi}{2}+n_{l_i} x_i) \sin(\frac{m\pi}{2}+n_{l_r} x_i) \right)\\
&+m! \sum_{l=1}^k \frac{1}{l+1}  \prod_{i=1}^{r-1} \sin^2(\frac{m\pi}{2}+n_l x_i),
   \end{split}
 \end{equation}
where
$$G:=\{(l_1,\cdot\cdot\cdot,l_r)\mid (l_1,\cdot\cdot\cdot,l_r)\neq (l,\cdot\cdot\cdot,l) ~\mbox{for}~l,l_1,\cdot\cdot\cdot,l_r=1,\cdot\cdot\cdot,k\}.$$

Hence
\begin{equation}\label{hm-thm-for-6}
\int_{\Omega} M^{\beta}_{\bm{\alpha'}}(D^mv_k) \psi dx\geq  C \sum_{l=1}^k \frac{1}{l+1}  \int_{(\frac{1}{4}\pi, \frac{3}{4}\pi)^N}  \prod_{i=1}^{r-1} \sin^2(\frac{m\pi}{2}+n_l x_i) dx-CI,
\end{equation}
where
$$I:= \left| \int_{\Omega} \psi(x) \sum_{(l_1,\cdot\cdot\cdot,l_r)\in G} \frac{1}{n_{l_r}^{s}(l_r+1)^{\frac{1}{r}}}  \prod_{i=1}^{r-1} \left(\frac{n_{l_i}^{\frac{m}{r}}}{(l_i+1)^{\frac{1}{r}}} \sin(\frac{m\pi}{2}+n_{l_i} x_i) \sin(\frac{m\pi}{2}+n_{l_r} x_i) \right) dx\right|.$$
Since $n_l=k^{\frac{r^2}{m}} 8^l$, it follows that
\begin{equation}\label{hm-thm-for-3}
\frac{n_{l_i}}{n_{l_j}}\leq |n_{l_i}-n_{l_j}|~\mbox{for any}~l_i,l_j=1,\cdot\cdot\cdot,k~\mbox{with}~l_i\neq l_j,
\end{equation}
\begin{equation}\label{hm-thm-for-4}
\min_{i\neq j} |n_{l_i}-n_{l_j}|\geq k^{\frac{r^2}{m(r-1)}}
\end{equation}
and
\begin{equation}\label{hm-thm-for-5}
\{n_l\mid l=1,\cdot\cdot\cdot,k\}\cap \{z\in \mathbb{R}\mid 2^{n-1}\leq |z|< 2^n\}~\mbox{has at most one element for any }~n\in \mathbb{N}.
\end{equation}
For any $(l_1,\cdot\cdot\cdot,l_r)\in G$, there exists $1\leq i_0\leq  r-1$ such that $l_{i_0}\neq l_r$, it follows from  (\ref{hm-th2-for-2}), (\ref{hm-thm-for-3}) and (\ref{hm-thm-for-4})  that
\begin{equation}
\begin{split}
&\left|\frac{1}{n_{l_r}^{s}(l_r+1)^{\frac{1}{r}}} \int_{\Omega} \psi(x) \prod_{i=1}^{r-1} \left(\frac{n_{l_i}^{\frac{m}{r}}}{(l_i+1)^{\frac{1}{r}}} \sin(\frac{m\pi}{2}+n_{l_i} x_i) \sin(\frac{m\pi}{2}+n_{l_r} x_i) \right) dx\right|\\
&\leq  \frac{C}{n_{l_r}^{s}(l_r+1)^{\frac{1}{r}}}   \prod_{i=1}^{r-1} \frac{n_{l_i}^{\frac{m}{r}}}{(l_i+1)^{\frac{1}{r}}} \left|\int_0^{\pi} \psi'(x_i)  \sin(\frac{m\pi}{2}+n_{l_i} x_i) \sin(\frac{m\pi}{2}+n_{l_r} x_i) dx_i \right|\\
&\leq C  \prod_{i=1}^{r-1} \left(\frac{n_{l_i}}{n_{l_r}}\right)^{\frac{m}{r}} \min \{\frac{1}{|n_{l_i}-n_{l_r}|^m},1\} \|D^m\psi\|_{L^{\infty}}\\
&\leq  \frac{C}{|n_{l_{i_0}}-n_{l_r}|^{m-\frac{m}{r}}}\\
&\leq C k^{-r}.
   \end{split}
 \end{equation}
Combine with (\ref{hm-thm-for-6}), we find
\begin{equation}
\int_{\Omega} M^{\alpha'}_{\bm{\alpha'}}(D^mv_k) \psi dx\geq  C \sum_{l=1}^k \frac{1}{l+1}  -C,
\end{equation}
which implies the second inequality of (\ref{hm-thm-for-2}). On the other hand, in order to prove the first inequality of (\ref{hm-thm-for-2}), it is enough to show that
\begin{equation}\label{hm-thm-for-10}
\|v'_k\|_{s,p} \leq C,
\end{equation}
where $v'_k:=(v^1_k,v^2_k,\cdot\cdot\cdot,v^{r-1}_k,\frac{v^r_k}{(x_r)^m})$.
In fact, the Littlewood-Paley characterization of the Besov space $B^s_{p,p}([0,2\pi]^N)$ (e.g. \cite{TH}) implies that
\begin{equation}\label{hm-thm-for-11}
\|v'_k\|_{s,p}\leq C \left(\|v'_k\|^p_{L^p([0,2\pi]^N)}+\sum_{j=1}^{\infty} 2^{sjp}\|T_j(v'_k)\|^p_{L^p([0,2\pi]^N)}\right)^{\frac{1}{p}}.
\end{equation}
Here the bounded operators $T_j:L^p\rightarrow L^p$ are defined by
$$T_j\left(\sum a_n e^{in\cdot x}\right)=\sum_{2^j\leq |n|< 2^{j+1}} \left( \rho(\frac{|n|}{2^{j+1}})- \rho(\frac{|n|}{2^{j}})\right)a_ne^{in\cdot x},$$
 where $\rho\in C_c^{\infty}(\mathbb{R})$ is a suitably chosen bump function. Then we have
 \begin{equation}\label{hm-thm-for-12}
 \|T_j(v'_k)\|^p_{L^p([0,2\pi]^N)}\leq C_p \sum_{l=1}^k \frac{1}{n_l^{sp}(l+1)^{\frac{p}{r}}} \|T_j(g_{l,k})\|^p_{L^p([0,2\pi]^N)},
 \end{equation}
where $g_{l,k}=(\sin(n_l x_1),\cdot\cdot\cdot,\sin(n_l x_{r-1}),\prod_{j=1}^{r-1} \sin(\frac{m\pi}{2}+n_l x_j))$. Indeed, since $\sin(n_l x_i)=\frac{1}{2i}(e^{in_lx_i}-e^{-in_lx_i})$, $g_{l,k}$ can be written as
$$g_{l,k}(x)= \sum_{\varepsilon\in \{-1,0,1\}^{r-1}} a_{\varepsilon} e^{n_l i \varepsilon\cdot \widehat{x}},$$
where $\widehat{x}=(x_1,\cdot\cdot\cdot,x_{r-1})$, $|a_{\varepsilon}|\leq 1$ for any $\varepsilon$. Set
$$S(j,l)=\{\varepsilon\in \{-1,0,1\}^{r-1}\mid 2^{j-1}\leq n_l |\varepsilon|<2^{j+2}\}$$
and
$$\chi(j,l)=\begin{cases}
1~~~~~S(j,l)\neq \emptyset\\
0~~~~~S(j,l)= \emptyset
\end{cases}.$$
Hence
\begin{equation}\label{hm-thm-for-13}
\|T_j(g_{l,k})\|^p_{L^p([0,2\pi]^N)}\leq C_{r,N}  \chi(j,l).
\end{equation}
For any $j$, if $S(j,l)\neq \emptyset$, then $\frac{2^{j-1}}{\sqrt{r-1}}\leq n_l< 2^{j+2}$, which implies that $\sum_{l=1}^{k}\chi(j,l)<[\frac{\log_2(r-1)}{6}]+1$. Thus, applying (\ref{hm-thm-for-11}), (\ref{hm-thm-for-12}) and (\ref{hm-thm-for-13}), we have
\begin{equation}
\begin{split}
\|v'_k\|_{s,p}^p&\leq  C_{p,s,N,r}\left( \|v'_k\|^p_{L^p([0,2\pi]^N)}+\sum_{j=1}^{\infty} \sum_{l=1}^k\frac{2^{sjp}}{n_l^{sp}(l+1)^{\frac{p}{r}}}\chi(j,l)\right)\\
&\leq C_{p,s,N,r}\left( \|v'_k\|^p_{L^p([0,2\pi]^N)}+ \sum_{l=1}^k\frac{1}{(l+1)^{\frac{p}{r}}} \left(\sum_{j=1}^{\infty}\chi(j,l)\right)\right).
\end{split}
\end{equation}
which implies (\ref{hm-thm-for-10}) since  $\sum_{j=1}^{\infty}\chi(j,l)\leq  [\frac{\log_2(r-1)}{2}]+4$ for any $l$.
\end{proof}

\begin{proof}[\bf Proof of Theorem \ref{hm-thm-2}]
Clearly Theorem \ref{hm-thm-2} is a consequence of  Proposition \ref{hm-pro-41}  and  \ref{hm-pro-42} as explained in Remark \ref{hm-rem-41}.
\end{proof}

Next we pay attention to the optimality results in case $1<p\leq r, s+\frac{m}{r}<m+\frac{N}{p}-\frac{N}{r}$.
\begin{proposition}\label{hm-pro-43}
Let $m, r$ be  integers with $1<p\leq r\leq \underline{n}$ and $s+\frac{m}{r}<m+\frac{N}{p}-\frac{N}{r}$. If there exist a function $g\in C_c^{\infty}(B(0,1), \mathbb{R}^n)$, $\beta\in I(r,n)$ and $\bm{\alpha}=(\alpha^1,\alpha^2,\cdot\cdot\cdot,\alpha^m)$ with $\alpha^j \in I(r,N)$ such that
\begin{equation}\label{hm-thm-for-15}
\int_{B(0,1)} M_{\bm{\alpha}}^{\beta} (D^m g(x)) |x|^m dx  \neq 0.
\end{equation}
 Then
 there exist a sequence $\{u_k\}_{k=1}^{\infty} \subset C^{m}(\overline{\Omega}, \mathbb{R}^N)$ and a function $\psi\in C_c^{\infty}(\Omega)$ satisfying the conclusions (\ref{hm-thm-2}).
\end{proposition}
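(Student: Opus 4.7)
The plan is to construct the required sequence from a single rescaled atom built from $g$. Because of the hypothesis $s+m/r<m+N/p-N/r$, the open interval $(N/r-m+m/r,\,N/p-s)$ is non-empty, so I would first fix any $\rho$ in this interval. Assuming without loss of generality that $0\in\Omega$, I would then set
\[
u_k(x):=k^{\rho}\,g(kx),
\]
so that $\mathrm{supp}(u_k)\subset B(0,1/k)\subset\Omega$ for all sufficiently large $k$. Standard dilation scaling of fractional Sobolev norms then delivers
\[
\|u_k\|_{W^{s,p}(\Omega)}\le C\,k^{\rho+s-N/p}\longrightarrow 0,
\]
since $\rho<N/p-s$, which gives the first conclusion of (\ref{hm-thm-for-1}).

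The heart of the argument is a matched Taylor expansion. The crucial preliminary I would establish is that $M^\beta_{\bm{\alpha}}(D^m g)$ is a null Lagrangian of order $m$: in the original variables it admits a divergence representation $\sum_{|I|=m}\partial_I F_I(D^{m-1}g)$ (reflecting the same divergence structure exploited throughout Section~3). Combined with the compact support of $g$, integrating by parts $m$ times yields the vanishing of the low-order moments,
\[
\int_{B(0,1)}M^\beta_{\bm{\alpha}}(D^m g)(y)\,y^{\gamma}\,dy=0\qquad\text{for every }|\gamma|<m.
\]
Next I would choose $\psi\in C_c^\infty(\Omega)$ whose Taylor polynomial at the origin vanishes through order $m-1$ and whose degree-$m$ coefficients match $|x|^m$. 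For even $m$ this is realized directly by $\psi(x):=\chi(x)|x|^m$ with a bump function $\chi\equiv 1$ near the origin; for odd $m$, since $|x|^m$ is merely of class $C^{m-1}$, one would pick a smooth $\psi$ whose partial derivatives $D^\gamma\psi(0)$ for $|\gamma|=m$ are tuned so that
\[
\sum_{|\gamma|=m}\frac{D^\gamma\psi(0)}{\gamma!}\int M^\beta_{\bm{\alpha}}(D^m g)(y)\,y^\gamma\,dy=\int_{B(0,1)} M^\beta_{\bm{\alpha}}(D^m g)(y)\,|y|^m\,dy\neq 0.
\]

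With these ingredients in place, the change of variables $y=kx$ rewrites the pairing as
\[
\int_\Omega M^\beta_{\bm{\alpha}}(D^m u_k)\,\psi\,dx=k^{r\rho+rm-N}\int M^\beta_{\bm{\alpha}}(D^m g)(y)\,\psi(y/k)\,dy,
\]
and Taylor-expanding $\psi(y/k)$ about the origin while exploiting the vanishing of all moments of order $<m$ reduces the right-hand side to
\[
k^{r\rho+rm-N-m}\Bigl(\int_{B(0,1)}M^\beta_{\bm{\alpha}}(D^m g)(y)\,|y|^m\,dy+o(1)\Bigr).
\]
Because $\rho>N/r-m+m/r$, the exponent $r\rho+rm-N-m$ is strictly positive, so the modulus blows up; reversing the sign of $\psi$ if necessary secures the positive sign and yields the second conclusion of (\ref{hm-thm-for-1}).

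The principal obstacle I anticipate is establishing the null-Lagrangian divergence representation of $M^\beta_{\bm{\alpha}}(D^m g)$ as an $m$-fold divergence in the original $x$-variables, and consequently the vanishing of moments of order less than $m$. The extended-variable identity of Lemma~\ref{hm-lem-3-1} does not furnish this directly; one must argue combinatorially, generalizing the classical Piola identity to arbitrary hyper-Jacobian minors. A secondary technical point is the choice of $\psi$ when $m$ is odd, where $|x|^m$ itself is not admissible as a test function; here one needs to know that hypothesis (\ref{hm-thm-for-15}) yields some homogeneous polynomial $P$ of degree $m$ with $\int M^\beta_{\bm{\alpha}}(D^m g)\,P\,dy\neq 0$, an approximation step that requires some care.
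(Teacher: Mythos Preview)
Your overall strategy---rescaling $u_k(x)=k^{\rho}g(kx)$ with $\rho$ in the open interval forced by the hypothesis, and reading off both limits from the scaling exponents---is precisely the paper's approach (the paper writes $u_\varepsilon=\varepsilon^{\rho}g(x/\varepsilon)$ with $\varepsilon=1/k$ and the opposite sign convention on $\rho$, and bounds $\|u_\varepsilon\|_{W^{s,p}}$ via the Gagliardo--Nirenberg interpolation of Lemma~\ref{hm-lem-4} rather than direct dilation scaling). The substantive difference is that the paper does \emph{not} invoke any null-Lagrangian identity or moment cancellation. It simply takes $\psi\in C_c^\infty(\Omega)$ with $\psi(x)=|x|^m+O(|x|^{m+1})$ near the origin; then the change of variables gives directly
\[
\int_\Omega M^\beta_{\bm\alpha}(D^m u_\varepsilon)\,\psi\,dx
=\varepsilon^{\rho r-rm+N+m}\!\int_{B(0,1)}M^\beta_{\bm\alpha}(D^m g)\,|x|^m\,dx
+O\bigl(\varepsilon^{\rho r-rm+N+m+1}\bigr),
\]
with no lower-order terms to kill, because the Taylor expansion of $\psi$ has none. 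Since you too arrange for $\psi$ to vanish through order $m-1$, the moments $\int M^\beta_{\bm\alpha}(D^m g)\,y^{\gamma}\,dy$ with $|\gamma|<m$ never enter your expansion either; your null-Lagrangian step is therefore redundant, and the ``principal obstacle'' you anticipate---establishing an $m$-fold divergence form for $M^\beta_{\bm\alpha}(D^m g)$ in the original variables---simply does not arise.

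Your secondary concern about odd $m$ is, however, a genuine one: the choice $\psi(x)=|x|^m+O(|x|^{m+1})$ with $\psi\in C^\infty$ is only available when $m$ is even, and for odd $m$ the hypothesis $\int M^\beta_{\bm\alpha}(D^m g)\,|y|^m\,dy\neq 0$ does not obviously yield a homogeneous \emph{polynomial} $P$ of degree $m$ with $\int M^\beta_{\bm\alpha}(D^m g)\,P\,dy\neq 0$. The paper's proof is written as if the even-$m$ choice always works and does not address this point; in the paper's applications (via Lemma~\ref{hm-lem-3}, Theorem~\ref{hm-thm-3} and Theorem~\ref{hm-thm-4-14}) the proposition is invoked only for $m=2$, where the issue is absent.
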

\begin{proof}
For any $0<\varepsilon<<1$ we set
\begin{equation}
u_{\varepsilon}=\varepsilon^\rho g(\frac{x}{\varepsilon}),
\end{equation}
where $\rho$ is a constant such that $s-\frac{N}{p}<\rho<m-\frac{N}{r}-\frac{m}{r}$.

On the one hand, Lemma \ref{hm-lem-4} implies that
\begin{equation}
\|u_{\varepsilon}\|_{s,p}\leq C\|u_{\varepsilon}\|^{\theta}_{L^p}\|u_{\varepsilon}\|^{1-\theta}_{[s]+1,p}\leq C\varepsilon^{\rho+\frac{N}{p}-s}\|g\|^{\theta}_{L^p}\|D^{[s]+1}g\|_{L^p}^{1-\theta},
\end{equation}
where $\theta=\frac{[s]+1-s}{[s]+1}$.
On the other hand, let $\psi\in C^{\infty}_c(\Omega)$ be such that $\psi(x)= |x|^m+ O(|x|^{m+1})$ as $x\rightarrow 0$. Then
\begin{equation}
\begin{split}
&\int_{\Omega} M_{\bm{\alpha}}^{\beta} (D^m u_{\varepsilon}) \psi dx=\varepsilon^{\rho r-rm+N}  \int_{B(0,1)} M_{\bm{\alpha}}^{\beta}(D^m g(x)) \psi(\varepsilon x) dx\\
&=\varepsilon^{\rho r -rm+N+m} \int_{B(0,1)} M_{\alpha}^{\beta}(D^mg(x)) |x|^m dx +O( \varepsilon^{\rho r-rm+N+m+1}).
   \end{split}
 \end{equation}
Take $\varepsilon = \frac{1}{k}$ and hence the conclusion is proved.
\end{proof}

In order to establishing the optimality results in case $1<p\leq r, s+\frac{m}{r}<m+\frac{N}{p}-\frac{N}{r}$, a  natural problem is raised whether there exists $g\in C_c^{\infty}(B(0,1), \mathbb{R}^N)$ such that   the conclusion (\ref{hm-thm-for-15}) holds.
We  have  positive answers to the problem in case $m=1$ or $2$, see Theorem \ref{hm-thm-3}, according the following Lemma:

\begin{lemma}\label{hm-lem-3}
Let $g\in C_c^{\infty}(B(0,1))$ be given as
\begin{equation}\label{hm-lem-for-1}
g(x)=\int_0^{|x|} h(\rho) d\rho
\end{equation}
for any $x\in \mathbb{R}^N$,  where $h\in C_c^{\infty}((0,1))$ and satisfies
$$\int_0^1 h(\rho)d\rho=0,~~~~\int_0^1h^{r}(\rho)\rho^{-r+N+s-1} d\rho\neq 0.$$
Here $r\geq 2,s\geq 1$ are integers.
Then for any $\alpha \in I(r,N)$, we have
\begin{equation}\label{hm-lem-for-2}
\int_{B(0,1)} M_{\alpha}^{\alpha}( D^2 g(x)) |x|^s dx\neq 0.
\end{equation}
\end{lemma}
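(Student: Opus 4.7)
The key observation is that $g$ is a radial function: $g(x)=f(|x|)$ with $f'(\rho)=h(\rho)$. The plan is first to compute the Hessian explicitly. A direct differentiation yields, for $x\neq 0$,
$$(D^2 g)_{ij}(x) = \frac{h(|x|)}{|x|}\,\delta_{ij} + \left(\frac{h'(|x|)}{|x|^2}-\frac{h(|x|)}{|x|^3}\right)x_i x_j,$$
which extends by zero to a neighborhood of the origin since $h$ vanishes there. Writing $A(|x|):=h(|x|)/|x|$ and $B(|x|):=h'(|x|)/|x|^2 - h(|x|)/|x|^3$, the Hessian has the rank-one-perturbation form $A\,I_N + B\,x\otimes x$. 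For any $\alpha\in I(r,N)$, the principal $r\times r$ minor $M_\alpha^\alpha(D^2g)$ therefore equals $\det(A\,I_r + B\,\tilde{x}\,\tilde{x}^T)$ with $\tilde x:=(x_{\alpha_1},\dots,x_{\alpha_r})$. The matrix determinant lemma then gives
$$M_\alpha^\alpha(D^2g) = A^r + A^{r-1}B\,|\tilde x|^2.$$

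Next I would integrate against $|x|^s$ and handle the two summands separately. The $A^r$-piece is immediate in polar coordinates:
$$\int_{B(0,1)} A^r|x|^s\,dx = \omega_{N-1}\int_0^1 h^r(\rho)\,\rho^{N+s-r-1}\,d\rho =: \omega_{N-1}\,J,$$
where $J$ is the quantity assumed to be nonzero. For the remaining piece, the rotational symmetry of the radial weight $A^{r-1}B|x|^s$ yields
$$\int_{B(0,1)}|\tilde x|^2 A^{r-1}B\,|x|^s\,dx = \frac{r}{N}\int_{B(0,1)} A^{r-1}B\,|x|^{s+2}\,dx,$$
and passage to polar coordinates reduces the right-hand side to
$$\frac{r\omega_{N-1}}{N}\left(\int_0^1 h^{r-1}h'\,\rho^{N+s-r}\,d\rho - \int_0^1 h^r\,\rho^{N+s-r-1}\,d\rho\right).$$

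The final step is an integration by parts on the first summand: since $h\in C_c^\infty((0,1))$ the boundary terms vanish, and $\tfrac{1}{r}(h^r)'=h^{r-1}h'$ gives
$$\int_0^1 h^{r-1}(\rho) h'(\rho)\,\rho^{N+s-r}\,d\rho = -\frac{N+s-r}{r}\,J.$$
Assembling everything, the second piece equals $-\tfrac{N+s}{N}\omega_{N-1}J$, so
$$\int_{B(0,1)} M_\alpha^\alpha(D^2g)\,|x|^s\,dx = \omega_{N-1}\!\left(1-\frac{N+s}{N}\right)\!J = -\frac{s\,\omega_{N-1}}{N}\,J \neq 0,$$
since $s\geq 1$ and $J\neq 0$ by hypothesis. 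The only delicate point in the argument is the sign-and-coefficient bookkeeping after integration by parts; everything else is routine. The auxiliary hypothesis $\int_0^1 h(\rho)\,d\rho=0$ is not used in this computation itself, but is what guarantees $g\in C_c^\infty(B(0,1))$ in the first place.
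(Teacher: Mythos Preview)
Your proof is correct and follows essentially the same route as the paper: both compute the Hessian of the radial function as a rank-one perturbation of a multiple of the identity, evaluate the principal minor via the matrix determinant lemma (the paper phrases this as ``Binet formula and $\operatorname{rank}(B)=1$''), pass to polar coordinates using the symmetry $\int_{S^{N-1}}\omega_i^2\,d\sigma=\tfrac{1}{N}|S^{N-1}|$, and integrate $h^{r-1}h'$ by parts to arrive at the same value $-\tfrac{s}{N}\,\omega_{N-1}J$. Your remark that the condition $\int_0^1 h=0$ serves only to make $g\in C_c^\infty(B(0,1))$ is also accurate.
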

\begin{proof}
 It is easy to see that
$$D^2 g=\frac{1}{|x|^3}(A+B),$$
where $A=(a_{ij})_{N\times N}$ and $B=(b_{ij})_{N\times N}$ are $N\times N$ matrices such that
$$a_{ij}=h(|x|)|x|^2\delta_{i}^{j},~~b_{ij}=\left(h'(|x|)|x|-h(|x|)\right)x_ix_j,~~~~i,j=1,\ldots,N.$$
Using Binet formula and the fact $\mbox{rank}(B)=1$, one has
\begin{align*}
M_{\alpha}^{\alpha}(A+B)&=M_{\alpha}^{\alpha}(A)+\sum_{i\in \alpha}\sum_{j\in\alpha}\sigma(i,\alpha-i)\sigma(j,\alpha-j)b_{ij}M_{\alpha-i}^{\alpha-j}(A)\\
&=h^r(|x|)|x|^{2r} - h^r(|x|)|x|^{2r-2}\sum_{i\in\alpha} x_i^2+h^{r-1}(|x|)h'(|x|)|x|^{2r-1} \sum_{i\in\alpha} x_i^2,
\end{align*}
Hence
\begin{align*}
\int_{B(0,1)}M_{\alpha}^{\alpha}( D^2 g)|x|^s dx=\int_{B(0,1)} |x|^{-3r+s}M_{\alpha}^{\alpha}(A+B) dx=\MyRoman{1}-\MyRoman{2}+\MyRoman{3},
\end{align*}
where
$$\MyRoman{1}:=\int_{B(0,1)} h^r(|x|) |x|^{-r+s} dx,$$
$$\MyRoman{2}:=\int_{B(0,1)} h^r(|x|) |x|^{-r-2+s} \sum_{i\in \alpha} x_i^2dx,$$
and
$$\MyRoman{3}:=\int_{B(0,1)} h^{r-1}(|x|)h'(|x|)|x|^{-r-1+s} \sum_{i\in \alpha} x_i^2dx.$$
Then integration in polar coordinates gives
$$\MyRoman{3}=\frac{r-N-s}{N} 2\pi\prod_{i=1}^{N-2}I(i) \int_0^1 h^r(\rho) \rho^{-r+N+s-1} d\rho,$$
where $I(i)=\int_0^{\pi} \sin^i \theta d\theta$. Similarly,
$$\MyRoman{2}=\frac{r}{N} 2\pi\prod_{i=1}^{N-2}I(i) \int_0^1 h^r(\rho) r^{-r+N+s-1} d\rho,$$
and
$$\MyRoman{1}= 2\pi\prod_{i=1}^{N-2}I(i) \int_0^1 h^r(\rho) \rho^{-r+N+s-1} d\rho,$$
which implies (\ref{hm-lem-for-2}), and then the proof is complete.
\end{proof}

\begin{proof}[\bf Proof of Theorem \ref{hm-thm-3}]

Note that if $m=2$ and $g= (g',\cdots,g')$ with $g'\in C^{2}(\Omega)$, then Lemma \ref{hm-lem-2-2} implies
$$M_{\bm{\alpha}}^{\alpha}(D^2g)=r! M_{\alpha^1}^{\alpha^2}(D^2g')$$
for any $\bm{\alpha}=(\alpha^1,\alpha^2)$, $\alpha\in I(r,N)$.
Hence Theorem \ref{hm-thm-3} is the consequence of Proposition \ref{hm-pro-41}, \ref{hm-pro-42},  \ref{hm-pro-43} and Lemma \ref{hm-lem-3}.
\end{proof}

In particular, we can give a reinforced versions of optimal results in case $m=2$.
\begin{theorem}\label{hm-thm-4-14}
Let $1< r\leq N$, $1<p<\infty$ and $0<s<\infty$ be such that $W^{s,p}(\Omega) \nsubseteq W^{2-\frac{2}{r},r}(\Omega)$. Then there exist a sequence $\{u_k\}_{k=1}^{\infty} \subset C^{m}(\overline{\Omega})$ and a function $\psi\in C_c^{\infty}(\Omega)$ such that
\begin{equation}
\lim_{k\rightarrow \infty} \|u_k\|_{s,p} =0, ~~~~\lim_{k\rightarrow \infty} \int_{\Omega} M^{\alpha'}_{\alpha'}(D^2u_k) \psi dx=\infty.
\end{equation}
\end{theorem}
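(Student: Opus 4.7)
The plan is to follow the three-subcase decomposition used for Theorem~\ref{hm-thm-3} (now with $m=2$). By Remark~\ref{hm-rem-41}, the hypothesis $W^{s,p}\nsubseteq W^{2-2/r,r}$ splits into: (1) $1<p\leq r$ and $s+2/r<2+N/p-N/r$; (2) $1<r<p$ and $0<s<2-2/r$; (3) $1<r<p$ and $s=2-2/r$ non-integer. Case~(1) is immediate: Lemma~\ref{hm-lem-3} applied with $r\geq 2$ and $s=2$ provides a scalar radial $g\in C_c^\infty(B(0,1))$ with $\int M^{\alpha'}_{\alpha'}(D^2g)|x|^2\,dx\neq 0$, and the rescaling $u_\varepsilon(x)=\varepsilon^\rho g(x/\varepsilon)$ with $\rho\in(s-N/p,\,2-(N+2)/r)$ (nonempty by the case~(1) condition) then goes through exactly as in the proof of Proposition~\ref{hm-pro-43}.

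For case~(2) I take the scalar analogue of the construction in Proposition~\ref{hm-pro-41},
$$u_k(x) = k^{-\rho}\,x_r^2\,\prod_{j=1}^{r-1}\sin^2(kx_j), \qquad s<\rho<2-2/r,$$
whose Sobolev bound $\|u_k\|_{s,p}\leq Ck^{s-\rho}\to 0$ follows from the Lemma~\ref{hm-lem-4} interpolation between $\|u_k\|_{L^p}\leq Ck^{-\rho}$ and $\|u_k\|_{W^{[s]+1,p}}\leq Ck^{[s]+1-\rho}$. Writing $u_k = k^{-\rho}x_r^2 F(kx')$ with $x'=(x_1,\dots,x_{r-1})$ and $F(y)=\prod\sin^2(y_j)$, the $\alpha'\times\alpha'$ Hessian submatrix has block form, and the block-determinant identity $\det\bigl(\begin{smallmatrix}A&b\\c^T&d\end{smallmatrix}\bigr)=d\det A-c^T\mathrm{adj}(A)b$ produces
$$M^{\alpha'}_{\alpha'}(D^2u_k)=2k^{2r-2-r\rho}\,x_r^{2(r-1)}\,G(kx'),\qquad G:=F\det F''-2F'^T\mathrm{adj}(F'')F'.$$
The key point is that $F''=F\cdot B$ with $B=D+uu^T$ rank-one-plus-diagonal, where $u_j=(\log f)'=2\cot y_j$ and $D=\mathrm{diag}(u_j')=\mathrm{diag}(-2/\sin^2 y_j)$, while $F'=Fu$; Sherman--Morrison together with $\mathrm{adj}(B)=(\det B)B^{-1}$ collapses $G$ to the closed form $G=F^r\prod_j u_j'\,(1-\alpha)$ with $\alpha=u^TD^{-1}u=-2\sum\cos^2 y_j$. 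Computing the periodic average through $C_n:=\langle\sin^{2n}\rangle=\binom{2n}{n}/4^n$ and Pascal's identity $C_r=(2r-1)C_{r-1}/(2r)$ gives $\langle G\rangle=(-2)^{r-1}(2r-1)C_{r-1}^{r-1}/r\neq 0$, and a Riemann--Lebesgue argument in $x'$ yields $\int M^{\alpha'}_{\alpha'}(D^2u_k)\psi\,dx\sim 2\langle G\rangle k^{2r-2-r\rho}\int x_r^{2(r-1)}\psi\,dx$ for any $\psi\in C_c^\infty(\Omega)$ with $\int x_r^{2(r-1)}\psi\,dx\neq 0$, which diverges after a sign adjustment of $\psi$.

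Case~(3) mirrors Proposition~\ref{hm-pro-42} with $\sin^2$ atoms in place of sines, by taking
$$u_k(x)=x_r^2\sum_{l=1}^k\frac{1}{n_l^{s}(l+1)^{1/r}}\prod_{j=1}^{r-1}\sin^2(n_lx_j),\qquad n_l=k^{r^2/2}\,8^l.$$
The bound $\|u_k\|_{s,p}\leq C$ follows from the same Littlewood--Paley computation as in Proposition~\ref{hm-pro-42}, the only change being that each atom has spectrum in $[n_l,2n_l]$ rather than at the single frequency $n_l$, still controlled by a single dyadic window thanks to the $8$-lacunarity. Multilinearity of the $\alpha'$-Hessian determinant decomposes $M^{\alpha'}_{\alpha'}(D^2u_k)$ into a diagonal sum (all $r$ second-derivative factors drawn from the same atom $l$) reproducing case~(2) with $k\rightsquigarrow n_l$ and contributing $\langle G\rangle/(l+1)$ per $l$ because $s=2-2/r$ forces $n_l^{2r-2-rs}=1$, so the diagonal sum is $\gtrsim \log k$; the off-diagonal remainder is uniformly bounded in $k$ by the gap estimates (\ref{hm-thm-for-3})--(\ref{hm-thm-for-5}). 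The renormalisation $\widetilde u_k=(\log k)^{-1/(2r)}u_k$ then satisfies $\|\widetilde u_k\|_{s,p}\to 0$ and $\int M^{\alpha'}_{\alpha'}(D^2\widetilde u_k)\psi\,dx\gtrsim (\log k)^{1/2}\to\infty$.

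The main obstacle is the nonvanishing of $\langle G\rangle$ in case~(2). This is precisely where the scalar Hessian setting differs essentially from the vector hyper-Jacobian setting of Proposition~\ref{hm-pro-41}: in the latter only the identity permutation of the hyper-Jacobian contributes and no averaging cancellation is possible, while in the scalar Hessian all permutations contribute and one must lean on the Sherman--Morrison closed form together with the Pascal recursion to rule out cancellation (as a sanity check, $\langle G\rangle=-3/2$ for $r=2$ and $\langle G\rangle=15/16$ for $r=3$, matching direct expansion). The off-diagonal estimate in case~(3) for $\sin^2$ atoms is the secondary technical point, requiring only a routine adaptation of the single-frequency gap bounds used in Proposition~\ref{hm-pro-42} to accommodate the two-frequency spectrum $\{0,\pm 2n_l\}$ of $\sin^2(n_l\cdot)$.
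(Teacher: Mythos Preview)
Your proof is correct and follows the same three-case split as the paper, but the execution of Cases~2 and~3 is genuinely different. The paper's proof is essentially a reduction to Baer--Jerison: in Case~2 it takes $u_k=k^{-\rho}x_r\prod_{i=1}^{r-1}\sin^2(kx_i)$ (first power of $x_r$) and invokes the explicit Hessian-minor identity from \cite[Prop.~4.1]{BJ} for the lower bound, and in Case~3 it takes the super-exponential lacunary sequence $n_l=k^{r^{3l}}$ and cites \cite[Prop.~5.1]{BJ} wholesale. You instead work with $x_r^2$ and give a fully self-contained argument: the Sherman--Morrison reduction of $G$ to $F^r\det D\,(1-\alpha)$ and the Pascal-recursion evaluation of $\langle G\rangle=(-2)^{r-1}(2r-1)C_{r-1}^{r-1}/r$ is a clean computation that avoids any external reference, and your Case~3 stays within the $8$-lacunary template of the paper's own Proposition~\ref{hm-pro-42} rather than borrowing the much faster sequence from \cite{BJ}. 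What your approach buys is independence from \cite{BJ}; what the paper's approach buys is that the hard work (the off-diagonal cancellation for the \emph{scalar} Hessian determinant) is outsourced.

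One point deserves a caveat. Your Case~3 off-diagonal estimate is stated as a ``routine adaptation'' of Proposition~\ref{hm-pro-42}, but the scalar Hessian setting is structurally different from the vector hyper-Jacobian: there the determinant factors across the components $v_k^1,\dots,v_k^r$ so each off-diagonal summand is a clean product $\prod_i\sin(\tfrac{m\pi}{2}+n_{l_i}x_i)\sin(\tfrac{m\pi}{2}+n_{l_r}x_i)$, whereas here the multilinear expansion of $w\det(D^2_{x'}w)-2(\nabla w)^T\mathrm{adj}(D^2_{x'}w)\nabla w$ in $w=\sum_la_lg_l$ produces mixed row determinants. The tensor structure of each $g_l=\prod_j\sin^2(n_lx_j)$ still forces every such term to factor as a product of one-dimensional oscillatory integrals against $\psi'$, so the gap estimates \eqref{hm-thm-for-3}--\eqref{hm-thm-for-5} do ultimately control the sum (with frequencies $\{0,\pm2n_l\}$ replacing $\{\pm n_l\}$), but the bookkeeping is heavier than in Proposition~\ref{hm-pro-42} and should be written out rather than asserted. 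This is presumably why \cite{BJ} (and the paper, following it) opt for the super-exponential $n_l=k^{r^{3l}}$, which makes the off-diagonal bound essentially trivial at no cost to the Littlewood--Paley estimate.
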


\begin{proof}
We divide our proof in three case:

\textbf{Case 1:} $1<p\leq r$ and $s+\frac{2}{r}<2+\frac{N}{p}-\frac{N}{r}$

Apply Lemma  \ref{hm-lem-3} and the argument similar to one used in Proposition \ref{hm-pro-43}.

\textbf{Case 2:} $r<p$ and $0<s<2-\frac{2}{r}$

For $k>>1$, we set
$$u_k:=k^{-\rho}  x_{r}\Pi_{i=1}^{r-1} \sin^2(kx_i),$$
where $\rho$ is a constant with $s<\rho<2-\frac{2}{r}$.
According to the facts that $\|u_k\|_{L^{\infty}}\leq C k^{-\rho}$ and $\|D^2u_k\|_{L^{\infty}}\leq C k^{2-\rho}$, it follows that
$$\|u_k\|_{s,p}\leq C \|u_k\|_{L^p}^{1-\frac{s}{2}} \|u_k\|_{2,p}^{\frac{s}{2}}\leq C k^{s-\rho}.$$
On the other hand, Let $\psi\in C^{\infty}_c(\Omega)$ be defined as (\ref{hm-th2-for-2}), the (4.1) in \cite[Proposition 4.1]{BJ}  implies that
\begin{equation}
\begin{split}
&\left |\int_{\Omega} M_{\alpha'}^{\alpha'} (D^2 u_k) \psi dx\right|\geq   \left| \int_{(\frac{1}{4}\pi, \frac{3}{4}\pi)^N} M_{\alpha'}^{\alpha'}(D^2u_{k})dx\right|\\
&\geq    k^{2r-2-r\rho} 2^r \int_{(\frac{1}{4}\pi, \frac{3}{4}\pi)^N}  x_r^{r-2} \left(\prod_{i=1}^{r-1} \sin(kx_i)\right)^{2r-2}   \left(\sum_{j=1}^{r-1} \cos^2(kx_j)\right)dx\\
&=Ck^{2r-2-r\rho}.
   \end{split}
 \end{equation}

\textbf{Case 3:}  $2<r<p$ and $s=2-\frac{2}{r}$

For any $k\in \mathbb{N}$ with $k\geq 2$,
define $u_k$ with
$$u_k(x)=\frac{1}{(\ln k)^{\frac{1}{2r}}} x_r \sum_{l=1}^k \frac{1}{n_l^{2-\frac{2}{r}} l^{\frac{1}{r}}} \prod_{i=1}^{r-1} \sin^2 (n_l x_i)~~~~x\in \mathbb{R}^N,$$
where $n_l=k^{r^{3l}}$.
Let $\psi\in C^{\infty}_c(\Omega)$ be defined as (\ref{hm-th2-for-2}).
The argument similar to the one used in \cite[Proposition 5.1]{BJ} shows that
$$\|u_k\|_{W^{s,p}(\Omega)}\leq C \|u_k\|_{W^{s,p}((0,2\pi)^N)}\leq C \frac{1}{(\ln k)^{\frac{1}{2r}}}$$
and
$$
\left| \int_{\Omega} M_{\alpha'}^{\alpha'}(D^2u_{k}) \psi dx\right|=C \left| \int_{(0,2\pi)^{r}} M_{\alpha'}^{\alpha'}(D^2u_{k}) \prod_{i=1}^r \psi'(x_i) dx_1\cdot\cdot\cdot dx_r\right|
\geq C (\ln k)^{\frac{1}{2}}.
$$
\end{proof}


\section*{Acknowledgments}
\addcontentsline{toc}{chapter}{Acknowledgements}
This work is supported by NSF grant of China ( No. 11131005, No. 11301400) and Hubei Key Laboratory of Applied Mathematics (Hubei University).

\bibliographystyle{plain}

\end{document}